\numberwithin{equation}{section}
\newtheorem{theorem}{Theorem}[section]
\newtheorem{definition}[theorem]{Definition}
\newtheorem{lemma}[theorem]{Lemma}
\newtheorem{proposition}[theorem]{Proposition}
\newtheorem{remark}[theorem]{Remark}
\newcommand{\N}[1][]{\ensuremath{{\mathbb{N}^{#1}} }}
\newcommand{\R}[1][]{\ensuremath{{\mathbb{R}^{#1}} }}
\newcommand{\re}{\mathbb{R}}
\newcommand{\z}{\mathcal{Z}}
\newcommand{\E}{\mathcal{E}}
\title[On the nonlinear Schr\"odinger equation]{On the nonlinear Schr\"odinger equation in spaces of infinite mass and low regularity}
\author[Vanessa Barros, Sim\~ao Correia and Filipe Oliveira ]{Vanessa Barros, Sim\~ao Correia and Filipe Oliveira }
\keywords{Nonlinear Schrödinger equation; local well-posedness}
\subjclass[2010]{35Q55; 35Q60; 35A01; 35A02; 35B40; 35B06; 35A23; 35B30; 78A45}
\begin{document}

\maketitle

\begin{abstract}
We study the  nonlinear Schr\"odinger equation with initial data in $\z^s_p(\re^d)=\dot{H}^s(\R^d)\cap L^p(\R^d)$, where $0<s<\min\{d/2,1\}$ and $2<p<2d/(d-2s)$. After showing that the linear Schrödinger group is well-defined in this space, we prove local well-posedness in the whole range of parameters $s$ and $p$. The precise properties of the solution depend on the relation between the power of the nonlinearity and the integrability $p$. Finally, we present a global existence result for the defocusing cubic equation in dimension three for initial data with infinite mass and energy, using a variant of the Fourier truncation method.
\end{abstract}

\section{Introduction}\label{intro}
\noindent
It is well-known that the Schr\"odinger equations
\begin{equation}
\label{eqini}
iu_t+\Delta u+f(|u|)u=0,
\end{equation}
where $f\,:\,\re^+\to \re$, admit the formal mass-invariant
$$M(u)=\int |u|^2dx.$$ 
For initial data in $L^2$-based spaces, such as the Sobolev spaces $H^s(\re^d)$, the local well-posedness of \eqref{eqini} and the problem of  global existence versus blow-up in finite time of solutions (in which the mass of the initial data often plays a very important role) are now relatively well-understood for large classes of nonlinear potentials $f(|u|)$. On the other hand, for initial data $u_0\not\in L^2(\re^d)$, which we refer to as having ``infinite mass", results in the literature are rather scarce. One of the first works in this direction is due to Zhidkov: in \cite{zk}, the author studied the well-posedness of the Gross-Pitaevskii equation
\begin{equation}\tag{GP}
\label{gp}
iu_t+\Delta u-(|1-|u|^2)u=0
\end{equation}
in the space
$$X^1(\re)=L^{\infty}(\re)\cap\dot{H}^1(\re).$$
This space is specially suited to treat cases where \eqref{gp} is complemented with a non-zero boundary condition at infinity ($u\to \pm 1$). This situation arises naturally in several physical contexts described by \eqref{gp}, such as the study of dark solitons in nonlinear optics (\cite{kivshar}) or the propagation of solitary waves in Bose condensates (see for instance \cite{sulem} and the references therein).\\
The results in \cite{zk} were extended in \cite{gallo}  (see also \cite{goubet}) to higher spatial dimensions through the following generalizations of $X^1(\re)$, coined as ``Zhidkov spaces":
$$X^k(\re^d)=\{u\in L^{\infty}(\re^d)\,:\,\nabla u\in H^{k-1}(\re^d)\}.$$

\bigskip

\noindent
Also, in \cite{BV08}, the nonlinear Schr\"odinger equation 
\begin{equation}\label{NLS}\tag{NLS}
iu_t+\Delta u+ \lambda|u|^{\sigma}u=0,
\end{equation}
where $\lambda=\pm 1$, was considered with initial data of infinite-mass $u_0=a\delta_0+\tilde{u}_0$ for $a\in\re\setminus \{0\}$ and $\tilde{u}$ a regular function. In particular, local well-posedness was proved for $\tilde{u}_0\in L^2(\re^d)$ and $\sigma<2/d$.

\medskip
%

\noindent
In \cite{Correia}, we showed the local well-posedness of \eqref{NLS} for initial data in 
$$\z^1_p(\re^d)\coloneqq L^p(\re^d)\cap \dot{H}^1(\re^d) $$ for $2<p\le2\sigma+2$ and $d>2,$ and for $p>2\sigma+2$ in dimensions $d=1$ and $d=2$. Note also that, in \cite{gallo}, the well-posedness of \eqref{NLS} was addressed in $X^k(\re^d)$ for $s=k$ integer, $k>d/2$. Under this assumption, the space $X^k$ is an algebra and the local existence theory follows from standard arguments.
In the present work, we 
extend these results to rough initial data in
 $$\z_p^s(\re^d):= L^p(\re^d)\cap \dot{H}^s(\re^d),$$
 under the conditions
 \begin{equation}
 \label{condf}
 0<s< \min\{d/2, 1\},\,\quad 2<p<2^*:=\frac{2d}{d-2s}.
 \end{equation}
 The first essential step is the solvability of the linear equation in $\z^s_p$:
 \begin{theorem}\label{teo:linear}
 	Let $s$ and $p$ as in \eqref{condf}. The Schr\"odinger group $\{e^{it\Delta}\}_{t\in\re}$ acts continuously on 
 	$\z^s_p(\re^d)$. Moreover, for $u_0\in \z^s_p(\re^d)$, 
 	\begin{equation}\label{eq:estimativa_grupo}
 	\|e^{it\Delta}u_0\|_{\z^s_p}\lesssim (1+|t|)\|u_0\|_{\z^s_p}.
 	\end{equation}
 \end{theorem}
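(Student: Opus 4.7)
My plan is to estimate the $L^p$ and $\dot H^s$ components of $\z^s_p$ separately. The $\dot H^s$ part is immediate: the symbol $e^{-it|\xi|^2}$ has modulus one, so $e^{it\Delta}$ is an isometry on $\dot H^s$ and contributes no $t$-dependence. The real work is the $L^p$ estimate, for which I would split $u_0 = u_L + u_H$ using a smooth Littlewood--Paley projection onto low frequencies $|\xi|\lesssim 1$ and high frequencies $|\xi|\gtrsim 1$; both projectors are bounded on $L^p$ by Mikhlin's theorem, so $\|u_L\|_{L^p}, \|u_H\|_{L^p}\lesssim \|u_0\|_{L^p}$.

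For the high-frequency piece the $\dot H^s$ regularity becomes effective, since $|\xi|^{-s}$ is bounded on $\mathrm{supp}\,\widehat{u_H}$. A dyadic decomposition $u_H = \sum_{j\ge 1} P_j u_0$ together with Bernstein's inequality and $\|P_j u_0\|_{L^2}\lesssim 2^{-js}\|P_j u_0\|_{\dot H^s}$ yields
\[
\|P_j u_0\|_{L^p}\lesssim 2^{j(d(1/2-1/p)-s)}\|P_j u_0\|_{\dot H^s}.
\]
The condition $p<2^*$ rewrites as $s>d(1/2-1/p)$, so the geometric factor has negative exponent and summing in $j$ produces $\|u_H\|_{L^p}\lesssim \|u_0\|_{\dot H^s}$. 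Since $e^{it\Delta}$ commutes with $P_j$ and preserves both the $L^2$ and the $\dot H^s$ norms, the same bound persists for $e^{it\Delta}u_H$ uniformly in $t$.

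The main obstacle is the low-frequency piece, as $e^{it\Delta}$ is not bounded on $L^p$ for $p\neq 2$ and we cannot fall back on the $L^2$ isometry (there is no $L^2$ information on $u_0$). The operator $T_t:=e^{it\Delta}P_L$ is a Fourier multiplier with compactly supported symbol $e^{-it|\xi|^2}\psi(\xi)$, and its convolution kernel $K_t$ can be controlled by stationary phase on the phase $x\cdot\xi-t|\xi|^2$: the critical point $\xi_c=x/(2t)$ lies in $\mathrm{supp}\,\psi$ only for $|x|\lesssim |t|$, which gives $|K_t(x)|\lesssim |t|^{-d/2}\mathbf{1}_{|x|\lesssim |t|}$ plus rapidly decaying tails, and in particular $\|K_t\|_{L^1}\lesssim (1+|t|)^{d/2}$. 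Hence $T_t$ is bounded on $L^\infty$ with norm $\lesssim (1+|t|)^{d/2}$ (Young's inequality) and on $L^2$ with norm $1$ (Plancherel); Riesz--Thorin interpolation then delivers $\|T_t\|_{L^p\to L^p}\lesssim (1+|t|)^{d(1/2-1/p)}$, and the hypothesis $p<2^*$ forces $d(1/2-1/p)<s<1$, whence $(1+|t|)^{d(1/2-1/p)}\le 1+|t|$. Combining with the high-frequency estimate yields $\|e^{it\Delta}u_0\|_{L^p}\lesssim (1+|t|)\|u_0\|_{L^p}+\|u_0\|_{\dot H^s}\lesssim (1+|t|)\|u_0\|_{\z^s_p}$, which together with the $\dot H^s$-isometry is exactly \eqref{eq:estimativa_grupo}. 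Continuity of $t\mapsto e^{it\Delta}u_0$ in $\z^s_p$ then follows from density of Schwartz functions (where continuity is immediate) combined with the operator norm bound just obtained.
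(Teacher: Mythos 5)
Your argument is correct, and it reaches \eqref{eq:estimativa_grupo} by a genuinely different mechanism than the paper, even though both proofs ultimately rest on the same frequency splitting at $|\xi|\sim 1$. The paper (Lemma \ref{gallo}) writes $u_0=v_0+w_0$ with $v_0\in H^s(\re^d)$ the high-frequency part and $w_0$ the low-frequency part, which gains two extra derivatives, $w_0\in \z^{[s,s+2]}_p(\re^d)$; it then \emph{freezes} $w_0$ and observes that $e^{it\Delta}u_0-w_0$ solves a linear Schr\"odinger equation with the time-independent source $-\Delta w_0\in H^s(\re^d)$, so the Duhamel integral gives an $H^s$ bound growing like $|t|\,\|\Delta w_0\|_{H^s}$; the $L^p$ control then comes for free from $H^s\hookrightarrow L^p$ and the static term $\|w_0\|_{L^p}$. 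This is entirely soft: no oscillatory integrals, only unitarity on $H^s$, Sobolev embedding, and the fundamental theorem of calculus, and it is exactly the mechanism behind the Zhidkov-space results of \cite{gallo}. You instead treat the low-frequency evolution head-on as the Fourier multiplier $e^{-it|\xi|^2}\psi(\xi)$, proving $\|K_t\|_{L^1}\lesssim(1+|t|)^{d/2}$ by stationary/non-stationary phase and interpolating with the $L^2$ isometry to get the operator bound $(1+|t|)^{d(1/2-1/p)}$ on $L^p$; your high-frequency estimate via Bernstein is the quantitative counterpart of the paper's $v_0\in H^s$. What your route buys is a strictly sharper growth rate, $(1+|t|)^{d(1/2-1/p)}$ with $d(1/2-1/p)<s<1$, in place of $(1+|t|)$ --- sublinear growth that could be relevant for long-time questions --- at the price of a kernel estimate; the paper's route is more elementary and transfers verbatim to rougher settings where pointwise kernel bounds are awkward. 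Two small remarks: the interpolation step is genuinely necessary in your argument (Young's inequality alone gives $(1+|t|)^{d/2}$, which exceeds $1+|t|$ for $d\geq 3$), and you handle it correctly; and your continuity argument rests on the density of Schwartz functions in $\z^s_p(\re^d)$, which you assert without proof --- but the paper's own proof of \eqref{simao3} makes the equivalent unproved assertion that $H^s(\re^d)$ is dense in $\z^s_p(\re^d)$, so this is a shared omission rather than a defect of your approach.
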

 
 We now state the local existence results for \eqref{NLS}. To clarify, we say that $u$ is a solution of \eqref{NLS} with initial condition $u_0$ if it satisfies the corresponding Duhamel formula
 \begin{equation}\label{eq:duhamel}
 u(t) = e^{it\Delta} u_0 +i\lambda \int_{0}^{t}e^{i(t-\tau)\Delta} |u(\tau)|^{\sigma}u(\tau)d\tau.
 \end{equation}
 When we apply a fixed-point argument to \eqref{eq:duhamel}, the main difficulty is the control of the $L^\infty_tL^p_x$ norm, which is not in the scope of the usual Strichartz estimates. There are two main approaches to tackle this problem. The first consists in proving that the integral term in the Duhamel formula, $u(t)-e^{it\Delta}u_0$, is in $L^q_tL^r_x$, for an admissible pair $(q,r)$ (in the Strichartz sense \eqref{eq:admissivel}). However, as in \cite{Correia}, this strategy only works if $p\le 2\sigma+2$:
 \begin{theorem}\label{teo:exist_p_peq}
 	Let $s$ and $p$ as in \eqref{condf}, $p\le 2\sigma+2$ and $u_0\in \z^s_p(\R^d)$. Then there exist $T=T(\|u_0\|_{\z^s_p})>0$, an admissible pair $(q,r)$ and a unique solution
 	$$u\in C([0,T],\z^s_p(\R^d))\cap L^q((0,T),\dot{W}^{s,r}(\R^d))$$
 	to \eqref{NLS} with initial condition $u(0)=u_0$. Moreover
 	\begin{equation}\label{eq:ganho_intg}
 	u-e^{it\Delta}u_0\in C([0,T],L^2(\R^d))\cap L^q((0,T),L^r(\R^d)).
 	\end{equation}
 \end{theorem}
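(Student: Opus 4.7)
The plan is to run a contraction mapping argument for the Duhamel map
\[
\Phi(u)(t) := e^{it\Delta}u_0 + i\lambda \int_0^t e^{i(t-\tau)\Delta}|u(\tau)|^\sigma u(\tau)\,d\tau
\]
on a closed ball inside a space built from three norms: the $L^\infty_t\z^s_p$-norm of $u$, a Strichartz norm $L^q_t\dot{W}^{s,r}_x$ for some admissible pair $(q,r)$ to be chosen, and an auxiliary $L^q_t L^r_x \cap L^\infty_t L^2_x$-norm applied to the Duhamel term $u - e^{it\Delta}u_0$. The linear part is handled directly by Theorem~\ref{teo:linear}, which controls $e^{it\Delta}u_0$ in $L^\infty_t\z^s_p$ by $(1+T)\|u_0\|_{\z^s_p}$, while the standard Strichartz inequality applied to $D^s u_0 \in L^2$ controls it in $L^q_t \dot{W}^{s,r}_x$.

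For the nonlinear contribution to the $L^q_t\dot{W}^{s,r}_x$-norm, the inhomogeneous Strichartz estimate reduces matters to bounding $\|D^s(|u|^\sigma u)\|_{L^{\tilde q'}_t L^{\tilde r'}_x}$ for an admissible pair $(\tilde q,\tilde r)$. I would use the fractional chain rule
\[
\|D^s(|u|^\sigma u)\|_{L^{\tilde r'}_x} \lesssim \|u\|_{L^a_x}^\sigma\|D^s u\|_{L^r_x},
\]
with $1/\tilde r' = \sigma/a + 1/r$, followed by H\"older in time, picking up a factor $T^\theta$ with $\theta>0$. The intermediate Lebesgue exponent $a$ is produced by interpolating $\|u\|_{L^p_x}$ with the Sobolev embedding $\dot{H}^s \hookrightarrow L^{2^*}$, which is licit because $p$ sits strictly between $2$ and $2^*$ by \eqref{condf}.

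The delicate step is the $L^\infty_t L^p_x$-control of the Duhamel term, which is out of reach of Strichartz. My plan is the indirect strategy described after \eqref{eq:duhamel}: I would first show that the Duhamel term belongs to $L^\infty_t L^2_x$ and then interpolate with its $L^\infty_t L^{2^*}_x$-control, inherited from $\dot{H}^s$ via Sobolev, to recover $L^\infty_t L^p_x$ on account of $2 < p < 2^*$. The $L^\infty_t L^2_x$-bound comes from the Strichartz estimate with endpoint target $L^\infty_t L^2_x$ and source in $L^1_t L^2_x$; by H\"older in space, $\||u|^\sigma u\|_{L^2_x} \le \|u\|_{L^{2\sigma+2}_x}^{\sigma+1}$, and the hypothesis $p\le 2\sigma+2$ is exactly what guarantees $\|u\|_{L^{2\sigma+2}_x} \lesssim \|u\|_{\z^s_p}$ by interpolating $L^p$ with $L^{2^*}$ (one checks in passing that $2\sigma+2 \le 2^*$ in the relevant range of $\sigma$). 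A final H\"older in time absorbs this into $L^1_t L^2_x$ with a positive power of $T$.

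The same chain of estimates, combined with the pointwise inequality $\bigl||u|^\sigma u - |v|^\sigma v\bigr| \lesssim (|u|^\sigma + |v|^\sigma)|u-v|$, yields the contraction of $\Phi$ on a ball of radius $R \simeq (1+T)\|u_0\|_{\z^s_p}$ provided $T$ is chosen small enough; uniqueness and the time-continuity statements then follow from standard persistence arguments and the strong continuity of the Schr\"odinger group. The main obstacle, and the reason the argument is restricted to $p\le 2\sigma+2$, is precisely the $L^\infty_t L^p_x$-estimate for the Duhamel term: it is the $L^2_x$ gain on $|u|^\sigma u$ that makes the whole scheme close. Outside this range $|u|^\sigma u$ no longer interpolates into $L^2_x$ and a different strategy has to be devised.
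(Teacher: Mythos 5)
Your overall architecture is the same as the paper's: a fixed point for the Duhamel map in a space recording the $L^\infty_tL^p_x$ norm, a Strichartz norm with $s$ derivatives, and an auxiliary underivatived Strichartz norm for the integral term, with the linear flow handled by Theorem \ref{teo:linear} and the nonlinearity by the fractional chain rule plus interpolation. There is, however, a concrete gap in the step you yourself single out as the crux. To place the Duhamel term in $L^\infty_tL^2_x$ you commit to the dual endpoint $L^1_tL^2_x$, which forces the bound $\||u|^\sigma u\|_{L^2_x}=\|u\|_{L^{2\sigma+2}_x}^{\sigma+1}$, and you assert ``in passing'' that $2\sigma+2\le 2^*$ so that $L^p\cap L^{2^*}\hookrightarrow L^{2\sigma+2}$. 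That inequality is equivalent to $\sigma\le 2s/(d-2s)$ and is \emph{not} implied by the hypotheses: $p\le 2\sigma+2$ together with \eqref{condf} only yields a \emph{lower} bound on $\sigma$, namely $\sigma\ge (p-2)/2$. For instance $d=3$, $s=1/2$ (so $2^*=3$), $p=5/2$, $\sigma=1$ satisfies every hypothesis of the theorem, yet $2\sigma+2=4>3=2^*$, and a compactly supported function behaving like $|x|^{-9/10}$ lies in $\dot H^{1/2}(\re^3)\cap L^{5/2}(\re^3)$ but not in $L^4(\re^3)$; your $L^2_x$ bound on $|u|^\sigma u$ is simply unavailable there, so the scheme does not close on the full range claimed.

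The repair is exactly what the paper does: use the inhomogeneous Strichartz estimate with target $(\infty,2)$ and a general dual-admissible source $L^{\gamma'}_tL^{\rho'}_x$ rather than the $L^1_tL^2_x$ endpoint. Choosing $\rho'=p/(\sigma+1)$ gives $\||u|^\sigma u\|_{L^{\rho'}_x}=\|u\|_{L^p_x}^{\sigma+1}$ with no interpolation at all; the hypothesis $p\le 2\sigma+2$ enters precisely as the admissibility constraint $\rho'\le 2$, while the companion constraint $\rho\le 2d/(d-2)$ is what generates the paper's two cases (for small $p$ one must take $r=\rho$ and interpolate $L^p$ against the Sobolev embedding of $B^{s,2}_\rho$, whose target exponent exceeds $2^*$, rather than against $L^{2^*}$). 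So the correct reading of the restriction is not ``$|u|^\sigma u$ gains $L^2_x$'' but ``$|u|^\sigma u$ gains some dual-admissible $L^{\rho'}_x$ with $\rho'(\sigma+1)\ge p$'', as in the remark closing Section \ref{mainsection}. A secondary point: you cannot expect to contract in the full norm, since the difference $D^s(|u|^\sigma u-|v|^\sigma v)$ does not obey a usable Lipschitz estimate for small $\sigma$ and $L^\infty_tL^p_x$ balls are only weak-* closed; as in Proposition \ref{ms}, one contracts in the weaker metric $\|u-v\|_{L^\infty_tL^2_x\cap L^q_tL^r_x}$ on a ball of the strong space and must verify separately that this ball is complete for that metric.
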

 Observe that \eqref{eq:ganho_intg} represents a gain of integrability through the nonlinear term. In other words, the large tails of the solution evolve linearly, while the nonlinear effects are localized in space. As it will be seen, this property is a direct consequence of the classical Strichartz estimates, which are in themselves an embodiment of gain in integrability. For $p>2\sigma +2$, this feature is no longer available and we follow the second approach, which consists in using directly \eqref{eq:estimativa_grupo} and then reach an admissible $L^q_tL^r_x$ at the expense of $s$ derivatives. 
 \begin{theorem}\label{teo:exist_p_grand}
 	Let $s$ and $p$ as in \eqref{condf}, $p> 2\sigma+2$, and $u_0\in \z^s_p(\R^d)$. Then there exist  $T=T(\|u_0\|_{\z^s_p})>0$,  admissible pairs $(q_j,r_j),\ j=1,2,3$, and a unique solution
 	$$u\in C([0,T],\z^s_p(\R^d))\bigcap_{j=1}^3 L^{q_j}((0,T),\dot{W}^{s,r_j}(\R^d))$$
 	to \eqref{NLS} with initial condition $u(0)=u_0$.
 \end{theorem}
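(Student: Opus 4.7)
My plan is to construct the solution via a Picard iteration on the Duhamel operator
$$\Phi(u)(t) := e^{it\Delta}u_0 + i\lambda\int_0^{t} e^{i(t-\tau)\Delta}|u(\tau)|^{\sigma}u(\tau)\,d\tau$$
on a closed ball of
$$X_T := C([0,T],\z^s_p(\R^d))\cap \bigcap_{j=1}^{3}L^{q_j}((0,T),\dot{W}^{s,r_j}(\R^d)),$$
endowed with the natural norm. The three Strichartz-admissible pairs $(q_j,r_j)$ will be chosen so that all nonlinear estimates close in the regime $p>2\sigma+2$.

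For the linear part, Theorem \ref{teo:linear} supplies $\|e^{it\Delta}u_0\|_{L^{\infty}_T\z^s_p}\lesssim (1+T)\|u_0\|_{\z^s_p}$, while the homogeneous Strichartz inequality gives $\|e^{it\Delta}u_0\|_{L^{q_j}_T\dot{W}^{s,r_j}}\lesssim \|u_0\|_{\dot{H}^s}\le\|u_0\|_{\z^s_p}$. Since the Strichartz gain-of-integrability argument used in the proof of Theorem \ref{teo:exist_p_peq} is no longer available, I would instead control the $\z^s_p$-norm of the Duhamel integral by inserting Theorem \ref{teo:linear} inside the time integral,
$$\left\|\int_0^{t} e^{i(t-\tau)\Delta}F(\tau)\,d\tau\right\|_{L^{\infty}_T\z^s_p}\lesssim (1+T)\int_0^{T}\|F(\tau)\|_{\z^s_p}\,d\tau,$$
and control the $L^{q_j}_T\dot{W}^{s,r_j}$ norms of the integral term by the standard inhomogeneous Strichartz estimates, paying $s$ derivatives as announced in the discussion preceding the theorem.

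The core of the argument is the pointwise-in-time nonlinear estimate of $\|F(\tau)\|_{\z^s_p}$ with $F=|u|^{\sigma}u$. For the $L^p_x$-part, H\"older in space gives
$$\||u|^{\sigma}u\|_{L^p_x}=\|u\|_{L^{p(\sigma+1)}_x}^{\sigma+1}\lesssim \|u\|_{L^p_x}^{(\sigma+1)(1-\theta)}\|u\|_{L^{r_1^*}_x}^{(\sigma+1)\theta},$$
where $r_1^*$ denotes the Sobolev conjugate of $(s,r_1)$, so that $\dot{W}^{s,r_1}\hookrightarrow L^{r_1^*}$; the pair $(q_1,r_1)$ is chosen with $r_1^*\ge p(\sigma+1)$, which uniquely determines $\theta\in(0,1)$ by interpolation. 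For the $\dot{H}^s_x$-part, the fractional chain rule yields
$$\||u|^{\sigma}u\|_{\dot{H}^s_x}\lesssim \|u\|_{L^{\rho}_x}^{\sigma}\|u\|_{\dot{W}^{s,r_2}_x},\qquad \tfrac{1}{2}=\tfrac{\sigma}{\rho}+\tfrac{1}{r_2},$$
with $(q_2,r_2)$ admissible, and I would then bound $\|u\|_{L^{\rho}_x}\lesssim \|u\|_{\dot{W}^{s,r_3}_x}$ via the Sobolev embedding associated to a third admissible pair $(q_3,r_3)$. Substituting these pointwise bounds into the preceding integral estimates and applying H\"older in time produces a bound of the shape
$$\|\Phi(u)\|_{X_T}\le C_0\|u_0\|_{\z^s_p}+C\,T^{\delta}\|u\|_{X_T}^{\sigma+1}$$
for some $\delta>0$. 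The analogous Lipschitz estimate, based on $\bigl||a|^{\sigma}a-|b|^{\sigma}b\bigr|\lesssim (|a|^{\sigma}+|b|^{\sigma})|a-b|$, gives contraction on a small ball for $T$ sufficiently small, yielding both existence and uniqueness.

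The principal obstacle is the simultaneous selection of the three admissible pairs so that the Sobolev conjugates $r_1^*$, $r_3^*$ and the Kato--Ponce exponent $\rho$ fit together consistently, each $r_j$ remains inside the Strichartz admissibility range, and every application of H\"older in time yields a strictly positive power of $T$. Verifying that such a compatible choice exists exactly under the hypotheses $0<s<\min\{d/2,1\}$, $2<p<2^*$, and $p>2\sigma+2$ is where the algebraic heart of the proof lies.
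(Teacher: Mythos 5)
Your overall architecture coincides with the paper's: a fixed point in a space mixing $L^\infty_t\z^s_p$ with three Strichartz norms at regularity $s$, where the $L^p_x$ component of the Duhamel term is controlled by inserting the group bound \eqref{eq:estimativa_grupo} inside the time integral, and the derivative components are controlled by inhomogeneous Strichartz estimates at the cost of $s$ derivatives. The fractional Leibniz/chain-rule estimates you invoke correspond to the Besov-space product estimates the paper takes from Cazenave.

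The genuine gap is that your argument stops exactly where the content of the theorem lies: you defer, and never carry out, the verification that three admissible pairs can be chosen so that all spatial exponents are compatible and every H\"older-in-time application produces a strictly positive power of $T$. This is not routine bookkeeping; it is precisely where the hypotheses $p>2\sigma+2$ and $s<\min\{d/2,1\}$ enter, through the induced restriction $\sigma\le p/2-1\le 2s/(d-2s)$ of \eqref{sigmarestriction}, and without it the claimed bound $\|\Phi(u)\|_{X_T}\le C_0\|u_0\|_{\z^s_p}+CT^\delta\|u\|_{X_T}^{\sigma+1}$ with $\delta>0$ is unsubstantiated. Concretely: (i) your interpolation for $\||u|^\sigma u\|_{L^p}=\|u\|_{L^{p(\sigma+1)}}^{\sigma+1}$ requires both $p(\sigma+1)\le r_1^*\le 2d/(d-2-2s)$ and a lower bound on the time exponent (the paper's conditions \eqref{4c1} and \eqref{4c2}, whose verification is a nontrivial case analysis in $d$); (ii) for the $\dot H^s$ part, your plan to control the factor $\|u\|_{L^\rho}^\sigma$ by a Sobolev embedding from a third Strichartz norm imports an additional time-integrability constraint of the form $\sigma/q_3+1/q_2<1$ that must also be checked and may force $r_3$ near the endpoint. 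The paper sidesteps this by taking $\rho=p$, i.e.\ $r=2p/(p-2\sigma)$ for the chain rule --- admissible precisely because $p-2\sigma\ge 2$ gives $2<r<p<2^*<2d/(d-2)$ --- so that the $\sigma$-power factor is absorbed by the $L^\infty_tL^p_x$ component of the norm and the time integral trivially yields $T^{1/q'}$; the analogous choice $r_3=2p/(p-\sigma)$ closes the Strichartz-norm estimates. Exhibiting these (or equivalent) exponents and checking the inequalities is the missing heart of the proof.
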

\begin{remark}
	In fact, the proof of this theorem naturally applies to the case $s=1$, thus extending the local well-posedness theory in $\z_p^1(\re^d)$ developed in \cite{simaoLpH1}.
\end{remark}

 Finally, we consider the global existence problem. When compared with the classical theory in $H^s(\re^d)$ spaces, two main difficulties arise: first, the linear group is not bounded in $\z^s_p(\re^d)$ uniformly in time, which means that global results for small data are not straightforward. Second, the conservation laws for the mass $M$ and the energy
 $$
 E(u)=\frac{1}{2}\int_{\re^d} |\nabla u|^2 dx -\frac{\lambda}{\sigma+2}\int_{\re^d} |u|^{\sigma+2} dx
 $$
 are not available in $\z_s^p(\re^d)$. In \cite{simaoLpH1}, a global result in $\z_{\sigma+2}^1(\re^d)$ was obtained for $\lambda=1$ and $\sigma$ small: even though the mass is not an available quantity, one may still derive a corrected mass estimate by throwing away the large tails of the solution. Since the energy is well-defined in $\z_{\sigma+2}^1(\re^d)$ (and thus conserved), this estimate is sufficient to apply a Gagliardo-Nirenberg argument. The case $\lambda=-1$ is trivial, as the energy provides a direct bound for the $\z_{\sigma+2}^1(\re^d)$ norm. It is interesting to notice that, in this case, the nonlinear evolution satisfies stronger bounds than the linear one, where a growth of the $L^{\sigma+2}(\re^d)$ norm is observed.
 
 Our goal is to understand whether a global existence result is possible when \textit{neither mass nor energy are well-defined}. To that end, we sought inspiration in the Fourier truncation method developed by Bourgain (\cite{bourgain2}, \cite{bourgain}) for the defocusing case $\lambda=-1$, where the a priori bound provided by the energy conservation, together with a nonlinear smoothing effect, allows one to extend the global existence result from $H^1(\re^d)$ to $H^s(\re^d)$, for $s$ close to 1. The technique has been refined in various papers (\cite{ckstt_dnls}, \cite{ckstt_kdv}, \cite{ckstt}), where one employs several multilinear estimates on Bourgain spaces. Since these refinements deeply rely on the conservation of the mass, we have decided to follow the more robust arguments of Bourgain instead. 
 
 In our context, one may consider the problem with either a lack of regularity (that is, in $\z_{\sigma+2}^s(\re^d)$, $s<1$), or with a lack of integrability (in $\z_p^1(\re^d)$, $p>\sigma+2$). The first direction seems quite similar to the $H^s(\re^d)$ case (but not entirely trivial). We therefore focus on the second possibility. Following the heuristics of the Fourier truncation method, the key property is a nonlinear ``smoothing'' estimate which, in the context of integrability, is akin to \eqref{eq:ganho_intg}. This argument, which suggests that global existence should be valid for $p$ close to $\sigma +2$, is made concrete in the following particular case:
 
 \begin{theorem}\label{teo:existglobal}
 	Set $\sigma=2$ and $\lambda=-1$. Then \eqref{NLS} is globally well-posed in $\z^1_p(\re^3)$, for any $4<p<9/2$.
 \end{theorem}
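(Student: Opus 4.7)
Since $4<p<9/2<6=2\sigma+2$, Theorem \ref{teo:exist_p_peq} provides a local solution $u\in C([0,T],\z^1_p(\re^3))$ satisfying the nonlinear smoothing $u-e^{it\Delta}u_0\in L^\infty_TL^2_x$, so the task is to convert this into a global statement by producing a polynomial-in-$t$ a priori bound on $\|u(t)\|_{\z^1_p}$. The strategy is to adapt Bourgain's high--low decomposition to the present lack of integrability, splitting $u_0$ into an $H^1$ piece (which evolves globally via the classical defocusing cubic theory) and a small $\z^1_p$ piece whose nonlinear evolution can be absorbed into a correction that remains in $H^1$.

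\textbf{The $H^1$ backbone.} For $u_0\in\z^1_p(\re^3)$ with $\|u_0\|_{\z^1_p}=M$, fix a smooth cutoff $\chi\in C^\infty_c(\re^3)$ with $\chi\equiv 1$ on $|x|\le 1$ and $\chi\equiv 0$ on $|x|\ge 2$, and decompose
\[
u_0=v_0+w_0,\qquad v_0:=\chi(x/R)u_0,\qquad w_0:=\bigl(1-\chi(x/R)\bigr)u_0,
\]
for $R\gg 1$ to be chosen in terms of $M$. H\"older's inequality on the support of $\chi(\cdot/R)$ and on the annular support of $\nabla\chi(\cdot/R)$ gives $v_0\in H^1(\re^3)$ with $\|v_0\|_{H^1}\lesssim R^{3(p-2)/(2p)}M$, while $\|w_0\|_{\z^1_p}\to 0$ as $R\to\infty$ (the $L^p$-piece by dominated convergence, the $\dot H^1$-piece with the quantitative rate $R^{(p-6)/(2p)}$, which is negative since $p<6$). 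The classical defocusing $H^1$ theory for the cubic NLS on $\re^3$ (Ginibre--Velo, Kato) then yields a unique global $v\in C(\re,H^1(\re^3))$ with $v(0)=v_0$, whose $H^1$-norm is bounded uniformly in time by conservation of mass and energy. The Sobolev embedding $H^1\hookrightarrow L^6\subset L^p$ (as $p<6$) places $v$ in $C(\re,\z^1_p)$ with a bound depending only on $M$ and $R$.

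\textbf{The correction $\tilde w$.} Write $u=v+e^{it\Delta}w_0+\tilde w$; then $\tilde w(0)=0$ and Duhamel gives
\[
\tilde w(t)=-i\int_0^t e^{i(t-\tau)\Delta}\bigl[|u|^2u-|v|^2v\bigr](\tau)\,d\tau.
\]
The core of the argument is to show, by the fixed-point/Strichartz scheme underlying Theorem \ref{teo:exist_p_peq} combined with the smoothing \eqref{eq:ganho_intg}, that $\tilde w\in L^\infty_{loc}(\re,H^1(\re^3))$ with a quantitative polynomial-in-$t$ bound. Expanding $|u|^2u-|v|^2v$ produces cubic monomials in $v$, $e^{i\tau\Delta}w_0$, and $\tilde w$ (and their conjugates), each containing at least one non-$v$ factor; each is estimated in a dual Strichartz norm using the uniform $H^1$-bound on $v$, the Sobolev tower $H^1\hookrightarrow L^q$ for $q\in[2,6]$, the linear estimate \eqref{eq:estimativa_grupo} applied to $e^{it\Delta}w_0$, and a bootstrap on $\tilde w$ itself. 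Combining the three pieces then yields
\[
\|u(t)\|_{\z^1_p}\le\|v(t)\|_{\z^1_p}+(1+t)\|w_0\|_{\z^1_p}+C\|\tilde w(t)\|_{H^1}\lesssim(1+t)^{\alpha}P(M),
\]
so the blow-up alternative of Theorem \ref{teo:exist_p_peq} rules out finite-time blow-up.

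\textbf{Main obstacle.} The delicate point is closing the multilinear bootstrap for $\tilde w$ in $H^1$, because the three building blocks behave very differently: $v$ has a large but fixed $H^1$-norm, the linear carrier $e^{it\Delta}w_0$ lives only in $\z^1_p$ with linear-in-$t$ growth and no $L^2$ control, and $\tilde w$ enters cubically. The mixed gradient terms---arising from $\nabla(|v|^2e^{it\Delta}w_0)$ and $\nabla(v\cdot(e^{it\Delta}w_0)^2)$---are the bottleneck, since estimating them by H\"older in space yields a Lebesgue exponent of the form $3p/(p+k)$ whose dual-admissibility, together with the $(1+t)$-loss produced by each copy of the carrier and the need to compensate the lack of $L^2$ on $e^{it\Delta}w_0$ with the $H^1$-Strichartz bounds on $v$, is expected to force exactly the upper bound $p<9/2$. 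Once these multilinear estimates close, the decomposition is performed only once with $R=R(M)$, and the polynomial-in-$t$ bound above propagates to give global existence.
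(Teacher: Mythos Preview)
Your approach differs from the paper's in a way that creates a genuine gap. The paper uses a \emph{Fourier-side} truncation $\psi=\check\chi_\epsilon\ast u_0$ (frequencies $\lesssim\epsilon$) and $\phi=u_0-\psi$, not a physical-space cutoff. This is not cosmetic: the frequency cutoff gives $\psi$ arbitrary extra regularity with the quantitative rate $\|\psi\|_{\dot H^\alpha}\lesssim\epsilon^{\alpha-1}$, and it is precisely this smoothness that drives the key multilinear gains via $\|e^{it\Delta}\psi\|_{L^{\infty^-}_x}\lesssim\|\psi\|_{\dot H^{(3/2)^-}}\lesssim\epsilon^{1/2^-}$. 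Your physical cutoff produces a carrier $w_0$ that lives only in $\z^1_p$ with no additional smoothness, so none of these $L^\infty$-based estimates are available; moreover, while the commutator term in $\nabla w_0$ decays like $R^{(p-6)/(2p)}$, the main contribution $(1-\chi(\cdot/R))\nabla u_0$ tends to zero in $L^2$ only by dominated convergence, with no rate in terms of $R$ or $M$. Hence there is no quantitative small parameter on the carrier side to balance against the blow-up of $\|v_0\|_{L^2}$ and $E(v_0)$ (note $u_0\notin L^4$ when $p>4$, so $E(v_0)\to\infty$), and the single-step bootstrap you propose cannot close.

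The second structural difference is that the paper does \emph{not} perform the decomposition once. It proves an inductive step (Proposition~\ref{prop:inducao}): on each interval of length $\delta=\epsilon^{8\theta^+}$, the $\z^1_4$-piece evolves by the full NLS with conserved energy, the correction $w$ is shown to satisfy $\|w\|_{L^q_tW^{1,r}_x}\lesssim\epsilon^\beta$ with $\beta=(1+4\theta)/2$, and then $w(\delta)$ is reabsorbed into the finite-energy piece, producing an energy increment $\lesssim\epsilon^{\beta-3\theta}$. Iterating $\delta^{-1}\sim\epsilon^{-8\theta}$ times to reach $T=1$, the total increment stays below $\epsilon^{-4\theta}$ exactly when $\beta>7\theta$, and solving this inequality with $\theta=(p-4)/(2p-4)$ yields $p<9/2$. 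Your proposal skips the iteration entirely and offers no mechanism to recover this arithmetic; the sentence ``is expected to force exactly the upper bound $p<9/2$'' is a guess rather than a computation, and in your setup there is no parameter (analogous to $\epsilon$) against which such a threshold could emerge.
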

 
 The methodology is in fact quite general and similar results should be achievable in the general case (at least for $\sigma$ even). This should be pursued in future works. 
\bigskip

\noindent
The rest of this paper is organized as follows: in Section \ref{notation} we introduce some notations and study the linear problem. In Sections \ref{mainsection} and \ref{main2} we prove the local well-posedness of $\eqref{NLS}$ for $p\le 2\sigma+2$ and $p>2\sigma +2$, respectively. Finally, in Section \ref{sec:existglobal}, we prove Theorem \ref{teo:existglobal}.
\section{Preliminaries and linear estimates}\label{notation}
We denote the Fourier transform by $\mathcal{F}$. In our analysis, we will use various Sobolev and Besov spaces, which we now recall for the sake of convenience:
\begin{itemize}
	\item the homogeneous Sobolev space over $L^r(\re^d)$ of order $s\in\re$,
	$$\dot{W}^{s,r}(\re^d)=\{f\in S' (\re^d)/\mathcal{C}(\re^d)\,:\,\mathcal{F}^{-1}(|\xi|^s\mathcal{F}f) \in L^r(\re^d) \},$$
	where $\mathcal{C}(\re^d)$ is the set of all constant distributions;
	\item the classical inhomogeneous Sobolev space of order $s$,
	$$W^{s,r}(\re^d)=\{f\in S' (\re^d)\,:\,\mathcal{F}^{-1}((1+|\xi|^2)^{s/2}\mathcal{F}f) \in L^r(\re^d) \};$$
	\item for $s \in \re$, $1\le p,q\le\infty$, the homogeneous Besov spaces,
	$$
	{\dot{B}}^{s,q}_{p}(\re^d)\coloneqq \left\{  f\in \mathcal{S}'(\mathbb{R})/\mathcal{C}(\re^d); \;
	\|f\|_{{B}^{s,q}_{p}}=\big{(}\sum_{j\in \mathbb{Z}}2^{qjs}\|\Delta_j f\|_{L^p}^q\big{)}^{\frac{1}{q}}
	<+\infty\,\right\},
	$$
	where $\Delta_j$ is the Littlewood-Paley projection onto frequencies $\sim 2^j$;
	\item the corresponding inhomogeneous Besov spaces,
	$$
	{B}^{s,q}_{p}(\re^d)\coloneqq \left\{  f\in \mathcal{S}'(\mathbb{R}^d)\,:\,
	\|f\|_{{B}^{s,q}_{p}}=\|S_0 f\|_{L^p}+
	\big{(}\sum_{j\geq 1}2^{qjs}\|\Delta_j f\|_{L^p}^q\big{)}^{\frac{1}{q}}
	<+\infty\,\right\},
	$$
	where $S_0$ is the projection onto frequencies $\lesssim 1$.
\end{itemize}

\bigskip
\noindent
In the remainder of this section, we study the linear problem for initial data $u_0\in\z_p^s(\re^d)$. 
By the Sobolev injection, we have
\begin{equation}
\label{injecoes}
H^s(\re^d)\hookrightarrow\z^s_p(\re^d)\hookrightarrow L^{q}(\re^d),\quad p\le q\le 2^*.
\end{equation}

\noindent
Following the ideas in \cite{gallo}, we introduce the following space:
\begin{definition}
	Given $n\in \N$, $s>0$ and $1\le p\le +\infty$, we define the function space $$\z^{[s,s+n]}_p(\re^d)\coloneqq \underset{\alpha\in [s,s+n]}{\bigcap} \z^{\alpha}_p(\re^d)$$
	endowed with the norm
	$$\|\cdot\|_{\z^{[s,s+n]}_p}=\underset{\alpha\in [s,s+n]}{\sup}\|\cdot\|_{\z_p^\alpha}$$
\end{definition}
\noindent
The next result states that any element in $\z_p^s$ can be canonically decomposed into a regular slowly decaying part and a rough localized term.

\begin{lemma}\label{gallo}
 Let $n\in \N$ and $2\le p\le 2^*$. Then  $$\z^s_p(\re^d)\cong H^s(\re^d)+ \z^{[s,s+n]}_p(\re^d).$$ 
\end{lemma}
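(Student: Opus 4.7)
The plan is to verify both continuous inclusions via a single Littlewood--Paley decomposition at unit frequency. The forward inclusion $H^s(\re^d) + \z^{[s,s+n]}_p(\re^d) \hookrightarrow \z^s_p(\re^d)$ follows at once from the Sobolev embedding $H^s \hookrightarrow L^p$ recorded in \eqref{injecoes} (valid since $2\le p\le 2^*$) together with the definitional inclusion $\z^{[s,s+n]}_p \subset \z^s_p$. For the converse, I would fix a radial cutoff $\varphi \in C_c^\infty(\re^d)$ with $\varphi \equiv 1$ on $\{|\xi|\le 1\}$ and $\mathrm{supp}\,\varphi\subset\{|\xi|\le 2\}$, set $P_\ell = \varphi(D)$ and $P_h = I - \varphi(D)$, and decompose any $u\in\z^s_p$ as $u = P_h u + P_\ell u$.

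For the high-frequency piece, since $|\xi|\ge 1$ on $\mathrm{supp}(1-\varphi)$,
\begin{equation*}
\|P_h u\|_{L^2}^2 \le \int_{|\xi|\ge 1} |\hat u(\xi)|^2\, d\xi \le \int |\xi|^{2s}|\hat u(\xi)|^2\, d\xi = \|u\|_{\dot H^s}^2,
\end{equation*}
while $\|P_h u\|_{\dot H^s}\le\|u\|_{\dot H^s}$ is immediate, so $P_h u \in H^s$ with $\|P_h u\|_{H^s}\lesssim\|u\|_{\dot H^s}$. For the low-frequency piece, $\varphi(D)$ is a Schwartz-class Fourier multiplier, hence bounded on $L^p$ for all $1<p<\infty$ (which holds under \eqref{condf}), giving $\|P_\ell u\|_{L^p}\lesssim\|u\|_{L^p}$; moreover, since the spectrum of $P_\ell u$ lies in $\{|\xi|\le 2\}$, for every $\alpha\in[s,s+n]$
\begin{equation*}
\|P_\ell u\|_{\dot H^\alpha}^2 \le \int_{|\xi|\le 2}|\xi|^{2(\alpha-s)}|\xi|^{2s}|\hat u(\xi)|^2\, d\xi \le 4^n\,\|u\|_{\dot H^s}^2,
\end{equation*}
uniformly in $\alpha$. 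Thus $P_\ell u\in\z^{[s,s+n]}_p$ with norm controlled by $\|u\|_{\z^s_p}$.

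I do not anticipate a serious obstacle; the only mildly delicate point is compatibility with the convention that $\dot H^s$ is taken modulo constants. Since $\varphi(0)=1$, the multiplier $P_\ell$ maps constants to themselves, so the decomposition descends to the quotient unambiguously. Assembling the estimates above yields $\|P_h u\|_{H^s} + \|P_\ell u\|_{\z^{[s,s+n]}_p}\lesssim \|u\|_{\z^s_p}$, which together with the forward inclusion establishes the claimed isomorphism.
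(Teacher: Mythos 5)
Your proof is correct and follows essentially the same route as the paper: a smooth Fourier cutoff at unit frequency, with the low-frequency piece placed in $\z^{[s,s+n]}_p$ via $L^p$-boundedness of the convolution and the compact frequency support, and the high-frequency piece placed in $H^s$ using that the symbol vanishes near the origin so that $|\xi|^{-s}$ times it is bounded. The only differences are cosmetic (the paper takes $\psi$ supported in the unit ball rather than $\equiv 1$ there), and your remark about the quotient by constants is a small point of extra care the paper omits.
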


\medskip

\begin{proof}
 Given $g\in \z^s_p(\re^d)$, we define $g_1=\check\psi\ast g,$ where $\psi$ is a bump function localized in the unit ball.  We claim  that $g_1\in \z^{[s,s+n]}_p(\re^d)$. In fact taking $\alpha\in [s,s+n]$ and writing $\alpha=s+k,\ 0\le k\le n$, we have that
 $$\|D^\alpha g_1\|_{L^2}=\|D^k \check\psi \ast D^s g\|_{L^2}\lesssim \| g\|_{\dot{H}^s}$$
 and
  $$\|g_1\|_{L^p}\lesssim \| g\|_{L^p}$$
by Young's inequality and the fact that $\psi\in \mathcal{S}(\re^d).$ 
  Therefore 
 \begin{equation}\label{gallo1}
 \|g_1\|_{\z^{[s,s+n]}_p}=\underset{\alpha\in [s,s+n]}{\sup}\|g_1\|_{\z_p^\alpha}\lesssim \|g\|_{\z_p^s}.
 \end{equation}
 It remains to prove that $g_2\coloneqq  g-g_1\in H^s(\re^d).$ In fact
  \begin{equation*}
 \|g_2\|_{H^s}=\|J^s\hat{g}(1-\psi)\|_{L^2}\lesssim \|\hat{g}(1-\psi)\|_{L^2}+\||\xi|^s\hat{g}(1-\psi)\|_{L^2}.
 \end{equation*}
 Observing that $\frac{1-\psi}{|\xi|^s}$ is bounded,
   \begin{equation}\label{gallo2}
 \|g_2\|_{H^s}\le \||\xi|^s\hat{g}\|_{L^2}\Big\|\frac{(1-\psi)}{|\xi|^s}\Big\|_{L^\infty}+\||\xi|^s\hat{g}\|_{L^2}\|{1-\psi}\|_{L^\infty}\lesssim \|g\|_{\z_p^s}.
 \end{equation}
 \noindent
 Finally, the embedding $$H^s(\re^d)+ \z^{[s,s+n]}_p(\re^d)\hookrightarrow \z^s_p(\re^d)$$ is
a direct consequence of \eqref{injecoes}.
\end{proof}

\begin{proof}[Proof of Theorem \ref{teo:linear}]
	 Let  $u_0\in {\z^s_p}(\re^d)$. From Lemma \ref{gallo}, taking $n=2$, we can decompose $u_0$ as $u_0=v_0+w_0,\,\textrm{ with } v_0\in H^s(\re^d) \text{ and } w_0\in \z^{[s,s+2]}_p(\re^d).$ We set $u(t)=e^{it\Delta}u_0$ and introduce $v(t)=u(t)-w_0.$ Then $v$ satisfies the linear IVP
	 \begin{equation}\label{linearv}
	 \left\{\begin{array}{l} i \partial_t v + \Delta v  = -\Delta w_0, \\
	 v(x,0)  = v_0(x)
	 \end{array}
	 \right.\quad  (x, t) \in \re^d\times \re,
	 \end{equation}
and
	 \begin{equation}
	 \label{eqv}
	 v(t)=e^{it\Delta}v_0-i\int_0^t e^{i(t-\tau)\Delta}(\Delta w_0) d\tau.
	 \end{equation}
	 Notice that $v(t)\in H^s(\re^d)$. Indeed,
	 \begin{align*}
	 \|v(t)\|_{H^s}&\le\|e^{it\Delta}v_0\|_{H^s}+\int_0^t \|e^{i(t-\tau)\Delta}(\Delta w_0)\|_{H^s}d\tau\le \|v_0\|_{H^s}+|t|(\|w_0\|_{\dot{H}^2}+\|w_0\|_{\dot{H}^{s+2}})\\&\le \|v_0\|_{H^s}+2|t|\|w_0\|_{\z_p^{[s;s+2]}}
	 \end{align*}
	 since $s\le 2$. Hence, by Lemma \ref{gallo},
	 $u(t)=v(t)+w_0\in H^s(\re^d)+\z^{[s,s+2]}_p(\re^d)\hookrightarrow \z_p^s(\re^d)$
	 and 
	 $$\|u(t)\|_{\z_p^s}\le \|v(t)\|_{H^s}+ \| w_0\|_{\z^{[s,s+2]}}=\|v_0\|_{H^s}+2|t|\|w_0\|_{\z_p^{[s;s+2]}}+\| w_0\|_{\z^{[s,s+2]}}\lesssim (1+|t|)\|u_0\|_{\z^s_p}.$$

\noindent
Next, we show that $\{e^{it\Delta}\}_{t\in\re}$ defines a continuous group over $\z_s^p(\re^d)$.
From  \eqref{eq:estimativa_grupo}, 
 $$e^{it\Delta}:\z^s_p(\re^d)\mapsto \z^s_p(\re^d)$$ is a bounded operator and we only need to show, for $\phi\in \z_p^s(\re^d)$, that
 \begin{equation}
 \underset{t\rightarrow 0}{\lim}\|e^{it\Delta}\phi-\phi\|_{\z^s_p}=0.\label{simao3}
 \end{equation}

\noindent Consider a sequence $(\phi_n) \in H^s(\re^d)$ such that $\phi_n\underset{\z_p^s}{\rightarrow} \phi.$ Then

 $$\|e^{it\Delta}\phi-\phi\|_{\z^s_p}\le \|e^{it\Delta}(\phi-\phi_n)\|_{\z^s_p} +\|e^{it\Delta}\phi_n-\phi_n\|_{\z^s_p}+\|\phi_n-\phi\|_{\z^s_p}.$$
Given $\epsilon>0$, we choose $n$ such that  $\|\phi_n-\phi\|_{\z^s_p}<\epsilon$ and for any $|t|<1$, $\|e^{it\Delta}(\phi-\phi_n)\|_{\z^s_p}\lesssim\epsilon$ (by \eqref{eq:estimativa_grupo}). By continuity of $\{e^{it\Delta}\}_{t\in \R}$ over $H^s(\re^d)$, for $t$ small enough, $\|e^{it\Delta}\phi-\phi\|_{\z^s_p}\lesssim\|e^{it\Delta}\phi_n-\phi_n\|_{H^s}+ 2\epsilon \lesssim 3\epsilon$.
 \end{proof}

\section{Local well-posedness for $p\le 2\sigma+2$}\label{mainsection}
\noindent
We begin this section by stating some Strichartz estimates in Besov spaces. For the proof of these results  the reader may refer to \cite{cazenave}. As usual, we say the pair $(q,r)$ is admissible if 
\begin{equation}\label{eq:admissivel}
\dfrac 2q=d\Big(\dfrac{1}{2}-\dfrac{1}{r}\Big),\ 2\le r\le \left\{\begin{array}{ll}
2d/(d-2),& d\ge 3\\
\infty,& d=1,2.
\end{array}\right. \mbox{ and, if }d=2, r\neq \infty.
\end{equation}
\begin{lemma}\label{stric}
	Given $s\in \R$ and $(q,r)$, $(\gamma,\rho)$ admissible pairs, the following properties hold:
	\begin{enumerate}
		\item {\bf Homogeneous estimate:} 
		$$\forall u_0 \in H^s(\re^d),\,\|e^{it\Delta}u_0\|_{L^q(\R,B^{s,2}_r)}\lesssim \|u_0\|_{B^{s,2}_2}\sim \|u_0\|_{H^s}.$$ 
		\item {\bf Inhomogeneous estimate:} Let $I$ be an interval of $\R$ (bounded or not) and $t_0\in\bar I$. There exists a constant $C$ independent of $I$ (and of $t_0\in I$) such that
		$$\forall g \in {L^{\gamma'}(I,B^{s,2}_{\rho'}(\re^d))},\,\Big\|\int_{t_0}^te^{i(t-\tau)\Delta}g(\cdot,\tau)d\tau\Big\|_{L^q(I,B^{s,2}_r)}\le C \|g\|_{L^{\gamma'}(I,B^{s,2}_{\rho'})}.$$
			\end{enumerate}
Both estimates remain valid by replacing the Besov spaces by their homogeneous counterpart.
\end{lemma}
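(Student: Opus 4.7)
The plan is to reduce the Besov-valued Strichartz inequalities to the classical $L^q_t L^r_x$ Strichartz estimates via a Littlewood-Paley decomposition. The key structural observation is that the propagator $e^{it\Delta}$ is a Fourier multiplier and therefore commutes with every dyadic projector $\Delta_j$ (as well as with the low-frequency cutoff $S_0$ in the inhomogeneous case). This lets us work frequency by frequency and then reassemble through the $\ell^2_j$-sum built into the Besov norms with second index $2$.

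For the homogeneous estimate, I would first unfold the Besov norm
$$\|e^{it\Delta}u_0\|_{L^q(\re,\dot{B}^{s,2}_r)}^2 = \Big\|\sum_{j\in\Z} 2^{2js}\|\Delta_j e^{it\Delta}u_0\|_{L^r_x}^2\Big\|_{L^{q/2}_t}.$$
Because any admissible pair satisfies $q\geq 2$, the triangle inequality in $L^{q/2}_t$ allows one to push the norm inside the series. After commuting $\Delta_j$ through $e^{it\Delta}$ and applying the scalar Strichartz estimate $\|e^{it\Delta}\Delta_j u_0\|_{L^q_t L^r_x}\lesssim \|\Delta_j u_0\|_{L^2_x}$ to each dyadic piece, one is left with
$$\sum_{j\in\Z} 2^{2js}\|\Delta_j u_0\|_{L^2_x}^2 = \|u_0\|_{\dot{B}^{s,2}_2}^2 \sim \|u_0\|_{\dot{H}^s}^2,$$
which is precisely the claimed bound. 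Passing from homogeneous to inhomogeneous Besov spaces merely adds the low-frequency contribution, handled by $\|e^{it\Delta}S_0 u_0\|_{L^r_x}\lesssim \|S_0 u_0\|_{L^2_x}$.

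For the inhomogeneous estimate, the same frequency localization gives
$$\Big\|\int_{t_0}^t e^{i(t-\tau)\Delta}g(\tau)d\tau\Big\|_{L^q(I,\dot{B}^{s,2}_r)}^2 \lesssim \sum_{j\in\Z} 2^{2js}\Big\|\int_{t_0}^t e^{i(t-\tau)\Delta}\Delta_j g(\tau)d\tau\Big\|_{L^q_t L^r_x}^2$$
by Minkowski in $L^{q/2}_t$. The scalar inhomogeneous Strichartz estimate applied dyadically bounds this by $\sum_j 2^{2js}\|\Delta_j g\|_{L^{\gamma'}_t L^{\rho'}_x}^2$. A second Minkowski-type embedding $\ell^2(L^p)\hookrightarrow L^p(\ell^2)$, valid for $p\leq 2$, then moves the $\ell^2_j$ sum inside the spatial and temporal integrations to reconstruct $\|g\|_{L^{\gamma'}(I,\dot{B}^{s,2}_{\rho'})}^2$. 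Both directions go through precisely because admissibility forces $\rho'\leq 2$ and $\gamma'\leq 2$. The independence of the constant with respect to $I$ and $t_0$ is inherited from the scalar inhomogeneous Strichartz estimate.

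The main (and essentially the only) subtlety is bookkeeping: one must check that every swap between $\ell^2_j$ and an $L^p$ norm is oriented in the direction dictated by Minkowski, which is governed by the two conditions $q\geq 2$ on the propagator side and $\gamma',\rho'\leq 2$ on the forcing side. Both are automatic from the definition of admissible pair, and once they are in place the argument becomes a diagonal application of the classical Lebesgue-valued Strichartz theory, as in \cite{cazenave}.
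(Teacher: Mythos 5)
Your Littlewood--Paley reduction to the scalar Strichartz estimates, with Minkowski's inequality applied in the directions dictated by $q\ge 2$ on one side and $\gamma',\rho'\le 2$ on the other, is correct and is essentially the argument behind this lemma: the paper offers no proof of its own and simply refers to \cite{cazenave}, where the Besov-space versions are obtained by exactly this dyadic summation. One cosmetic point: in the inhomogeneous Besov case the low-frequency block should be controlled by the scalar Strichartz estimate $\|e^{it\Delta}S_0u_0\|_{L^q_tL^r_x}\lesssim\|S_0u_0\|_{L^2}$ rather than by the fixed-time bound you wrote, since the latter does not yield a finite $L^q(\mathbb{R})$ norm in time for $q<\infty$.
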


\bigskip

\noindent
Throughout this section, we assume that \eqref{condf} holds and that $(q,r)$ is an admissible pair
 that will be fixed later.
Before proving Theorem \ref{teo:exist_p_peq}, let us define the function spaces where the solution of \eqref{NLS} will be obtained. Define, for $T>0$,
$$S_0\coloneqq  L^\infty((0,T),L^2(\re^d))\cap L^q((0,T),L^r(\re^d)),$$
$$S_s\coloneqq  L^\infty((0,T),H^s(\re^d))\cap L^q((0,T),{B}^{s,2}_r(\re^d))\hookrightarrow S_0$$
and, given $u_0 \in \z^s_p(\R^d)$ and $M>0$,
\begin{align}\label{defE}
\E\coloneqq  \{&u\in L^\infty((0,T),\z^s_p(\re^d)) \cap L^q((0,T),\dot{B}^{s,2}_r(\re^d)) \,:\,\nonumber\\
 &\|u\|_{\E}\coloneqq \|u\|_{L^\infty((0,T),L^p)}+\|u-e^{it\Delta}u_0\|_{S_s}\le M\}.
\end{align}

\begin{proposition}
	\label{ms} Let $u_0\in \z^s_p(\R^d)$ and $M>0$. Setting $d(u,v):=\|u-v\|_{S_0}$,  $(\mathcal{E},d)$ is a complete metric space.
\end{proposition}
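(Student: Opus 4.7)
The plan is to verify the metric axioms (which reduce immediately to $\|\cdot\|_{S_0}$ being a norm, once we know that $d$ is well-defined on $\E$) and then establish completeness by combining the Banach-space structure of $S_0$ with a weak-$\ast$ compactness argument that upgrades $S_0$-convergence to the stronger bound defining $\E$. To see that $d$ is well-defined, I would note that for $u,v\in\E$ one has $u-v=(u-e^{it\Delta}u_0)-(v-e^{it\Delta}u_0)\in S_s\hookrightarrow S_0$, so $d(u,v)=\|u-v\|_{S_0}$ is finite.

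For completeness, I would take a Cauchy sequence $\{u_n\}\subset\E$ and set $v_n\coloneqq u_n-e^{it\Delta}u_0\in S_s$. Since $d(u_n,u_m)=\|v_n-v_m\|_{S_0}$, the sequence $\{v_n\}$ is Cauchy in the Banach space $S_0$ and hence converges to some $v\in S_0$. The natural candidate for the limit is $u\coloneqq v+e^{it\Delta}u_0$, which trivially satisfies $d(u_n,u)\to 0$. The real content of the proposition, and the main obstacle, is to show that $u\in\E$: both the membership in $L^\infty((0,T),\z^s_p)\cap L^q((0,T),\dot{B}^{s,2}_r)$ and the norm bound $\|u\|_\E\le M$ must be recovered from convergence in the strictly weaker topology of $S_0$.

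I would carry out this upgrade via weak and weak-$\ast$ compactness. Because $\|u_n\|_{L^\infty((0,T),L^p)}\le M$ and $L^\infty((0,T),L^p)$ is the dual of the separable space $L^1((0,T),L^{p'})$, Banach--Alaoglu produces a subsequence $u_{n_k}\rightharpoonup^\ast \tilde u$ in $L^\infty((0,T),L^p)$. A diagonal extraction, using also that $L^\infty((0,T),H^s)$ is a dual of a separable space and that $L^q((0,T),{B}^{s,2}_r)$ is reflexive, further yields $v_{n_k}\rightharpoonup^\ast \tilde v_1$ in $L^\infty((0,T),H^s)$ and $v_{n_k}\rightharpoonup \tilde v_2$ in $L^q((0,T),{B}^{s,2}_r)$. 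Testing against $\varphi\in C^\infty_c((0,T)\times\re^d)$, each of these limits coincides with the strong $S_0$-limit $v$ (resp.\ $u$) as a distribution, forcing $\tilde v_1=\tilde v_2=v$ and $\tilde u=u$. Combined with Theorem \ref{teo:linear} and the homogeneous Strichartz estimate of Lemma \ref{stric} applied to $e^{it\Delta}u_0$, this places $u$ in the prescribed ambient space, and adding the corresponding lower-semicontinuity inequalities yields
\[
\|u\|_\E \;\le\; \liminf_k \|u_{n_k}\|_\E \;\le\; M,
\]
so $u\in\E$ and completeness follows.
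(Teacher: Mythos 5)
Your proposal is correct and follows essentially the same route as the paper: decompose $u_n=v_n+e^{it\Delta}u_0$, pass to the $S_0$-limit of $v_n$, and recover membership in $\E$ together with the bound $\|u\|_{\E}\le M$ by weak/weak-$\ast$ compactness and lower semicontinuity of the norms. The paper merely compresses the compactness step into a citation of \cite[Theorem 1.2.5]{cazenave} and identifies the $L^\infty_t L^p_x$ piece via pointwise-in-$t$ weak convergence in $L^p$ (deduced from the $L^\infty_t L^2_x$ convergence), which is only a cosmetic difference from your weak-$\ast$ argument in $L^\infty((0,T),L^p)$.
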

\begin{proof}
Let us first notice that for $u,v\in \E,$ 
$$\|u-v\|_{S_0}\le \|u-e^{it\Delta}u_0\|_{S_0}+\|v-e^{it\Delta}u_0\|_{S_0}\lesssim \|u-e^{it\Delta}u_0\|_{S_s}+\|v-e^{it\Delta}u_0\|_{S_s}\le 2M$$ 
and $d$ is well-defined over $\E$.

\noindent
Consider a Cauchy sequence $(u_n)_{n\in\N}$ in $\E$. Since 
$$v_n:=u_n-e^{it\Delta}u_0\in S_s\hookrightarrow S_0$$
and, for fixed $\epsilon>0$,
$$\|v_m-v_n\|_{S_0}=d(u_m,u_n)<\epsilon \text{ for $n$ and $m$ large enough,}$$
it follows that $(v_n)_{n\in\N}$ is a Cauchy sequence in $S_0$. Thus there exists  $v\in S_0$ such that
\begin{equation}\label{a}v_n=u_n-e^{it\Delta}u_0 \underset{S_0}{\rightarrow} v.\end{equation}
As $(v_n)_{n\in\N}$ is bounded in $S_s$, applying \cite[Theorem 1.2.5]{cazenave}, we see that
$v\in S_s$
and 
\begin{equation}\label{b}\|v\|_{S_s} \le \liminf \|u_n-e^{it\Delta}u_0\|_{S_s}.\end{equation}
Since $u_0 \in \z^s_p(\R^d)\hookrightarrow\dot{H}^{s}(\re^d)$, we have from Theorem \ref{teo:linear} and Lemma \ref{stric} that 
$$e^{it\Delta}u_0 \in L^\infty((0,T),\z^s_p(\R^d))\cap L^q((0,T),\dot{B}^{s,2}_r(\re^d)).$$
Hence, setting $u=v+e^{it\Delta}u_0$, we conclude that $d(u_n,u)\to 0$ and $u\in L^\infty((0,T),\z^s_p(\R^d))\cap L^q((0,T),\dot{B}^{s,2}_r(\re^d))$. Finally, for each $t\in (0,T)$, since $u_n(t)-u(t)\to 0$ in $L^2(\re^d)$, $u_n(t)\rightharpoonup u(t)$ in $L^p(\re^d)$ and
$$
\|u\|_{\E}= \|u\|_{L^\infty((0,T), L^p)} + \|v\|_{S_s}\le \liminf \left(\|u_n\|_{L^\infty((0,T), L^p)} + \|v_n\|_{S_s} \right)\le M.
$$
\end{proof}
%
\begin{proof}[Proof of Theorem \ref{teo:exist_p_peq}]
Consider the integral operator 
$$\Phi[u](t)\coloneqq e^{it\Delta}u_0+i\lambda\int_0^te^{i(t-\tau)\Delta}|u(\tau)|^\sigma u(\tau)d\tau,\ 0<t<T, u\in\mathcal{E}.$$
We  show that $\Phi$ is a contraction in $(\E,d)$. We begin by estimating
\begin{align}\label{ja}
\|\Phi[u](t)-e^{it\Delta}u_0\|_{S_s}=\Big\|\lambda\int_0^te^{i(t-\tau)\Delta}|u(\tau)|^\sigma u(\tau)d\tau\Big\|_{S_s}.
\end{align}
Applying Strichartz estimates,
\begin{align*}
\|\Phi[u](t)-e^{it\Delta}u_0\|_{S_s}\lesssim\||u|^\sigma u\|_{L^{\gamma'}((0,T),{B}^{s,2}_{\rho'})}.
\end{align*}
\noindent {\bf Case 1: $p\geq 2d(\sigma+1)/(d+2)$ or $d=1,2$.}\\
We put $r=p$ and define $\rho'=r/(\sigma+1)$. Observe that, if $d\ge 3$, $\rho\le 2d/(d-2)$ is equivalent to $p\geq 2d(\sigma+1)/(d+2)$. Using \cite[Proposition 4.9.4]{cazenave},  we infer that
\begin{align}\label{compare}
\||u|^\sigma u\|_{{B}^{s,2}_{\rho'}}\lesssim \| u\|_{L^a}^\sigma\| u\|_{{B}^{s,2}_{r}},
\end{align}
where $\displaystyle a=\sigma\rho'r/(r-\rho')=r=p$. Using Lemma \ref{stric}, this estimate implies that, for $(\gamma, \rho)$ admissible,
$$
\|\Phi[u](t)-e^{it\Delta}u_0\|_{S_s} \lesssim \left\| \|u\|_{L^a}^\sigma \|u\|_{B^{s,2}_r}\right\|_{L^{\gamma'}(0,T)}.
$$
We now invoke the embedding \eqref{injecoes} and Hölder's inequality to deduce that 
\begin{align}\label{i}
\|\Phi[u](t)-e^{it\Delta}u_0\|_{S_s}\lesssim & \| u\|_{L^{\infty}((0,T),{\z}^{s}_{p})}^\sigma \| u\|_{L^{\gamma'}((0,T),{B}^{s,2}_{r})}\\
\lesssim &\| u\|_{L^{\infty}((0,T),{\z}^{s}_{p})}^\sigma T^{\frac{1}{\gamma'}-\frac{1}{q}}\| u\|_{L^{q}((0,T),{B}^{s,2}_{r})}.\nonumber
\end{align}
Furthermore,
\begin{align*}
\| u\|_{{B}^{s,2}_{r}}\lesssim &\| u\|_{L^{r}}+\| u\|_{\dot{B}^{s,2}_{r}}\lesssim  \| u\|_{\z^s_p}+\| u\|_{\dot{B}^{s,2}_{r}}\\
\le &  \| u\|_{\z^s_p}+\| u-e^{it\Delta}u_0\|_{\dot{B}^{s,2}_{r}}+\| e^{it\Delta}u_0\|_{\dot{B}^{s,2}_{r}}.
\end{align*}

\noindent
From Lemma \ref{stric},
\begin{equation}\label{eq:controlobesov}
\| u\|_{L^{q}((0,T),{B}^{s,2}_{r})}\le T^{1/q}\|u\|_{L^\infty((0,T), \z_p^s)} +\| u-e^{it\Delta}u_0\|_{S_s}+\| u_0\|_{\dot{H}^{s}}.
\end{equation}
Observing that
\begin{align}\| u\|_{L^{\infty}((0,T),{\z}^{s}_{p})}&\le \| u\|_{L^{\infty}((0,T),L^{p})}+\| u\|_{L^{\infty}((0,T),\dot{H}^s)}\nonumber
\\\label{minh}
&\le \| u\|_{L^{\infty}((0,T),L^{p})}+\| u-e^{it\Delta}u_0\|_{L^{\infty}((0,T),\dot{H}^s)}+T\|u_0\|_{\dot{H}^s}\\&\lesssim M + T\|u_0\|_{\z_p^s},\nonumber
\end{align}
equation \eqref{eq:controlobesov} implies
\begin{equation}\label{eq:controlobesov2}
\| u\|_{L^{q}((0,T),{B}^{s,2}_{r})}\lesssim T^{1/q}\left(M + T\|u_0\|_{\z_p^s}\right) + M +\| u_0\|_{\z_p^s}.
\end{equation}
Therefore, by inequalities \eqref{i}, \eqref{minh} and \eqref{eq:controlobesov2}, we conclude that
\begin{equation}\label{iiii}
\|\Phi(u)-e^{it\Delta}u_0\|_{S_s}\lesssim
 (M+T\|u_0\|_{\z_p^s})^\sigma T^{\frac{1}{\gamma'}-\frac{1}{q}}(T^{1/q}(M+T\|u_o\|_{\z_p^s})+M+\| u_0\|_{\dot{H}^{s}}).
\end{equation}

\bigskip

\noindent
Next, using Theorem \ref{teo:linear} and the Sobolev embedding, we infer that
 \begin{align*}
\|\Phi(u)\|_{L^{\infty}((0,T),L^p)}&\le \|e^{it\Delta}u_0\|_{L^{\infty}((0,T),\z^s_{p})}+\|\Phi(u)-e^{it\Delta}u_0\|_{L^{\infty}((0,T),H^s)}\\
&\le (1+T)\|u_0\|_{\z^s_{p}}+\|\Phi(u)-e^{it\Delta}u_0\|_{S_s}.
\end{align*}
Choosing $M=2\|u_0\|_{\z^s_{p}}$ and $T>0$ sufficiently small (depending exclusively on $\|u_0\|_{\z^s_{p}}$), we know that 
$$\Phi(u)\in C([0,T],\z^s_p(\re^d)) \cap L^q((0,T),\dot{B}^{s,2}_r(\re^d)) \hookrightarrow \E\mbox{ and }\|\Phi(u)\|_{\mathcal{E}}\le M.$$

\bigskip

\noindent
To show the contraction estimate, take $u, v \in \E$. Using Strichartz estimates, we find that
\begin{align*}
&d(\Phi(u)-\Phi(v))\lesssim \| |u|^{\sigma}u- |v|^{\sigma}v\|_{L^{\gamma'}((0,T),L^{\rho'})}.
\end{align*}
Since
$||u|^{\sigma}u- |v|^{\sigma}v|\lesssim(|u|^\sigma-|v|^\sigma)|u-v|$, it follows from Hölder's inequality that
\begin{align*}
&d(\Phi(u)-\Phi(v))\le \|(\| u\|^\sigma_{L^{p}}+\| v\|^\sigma_{L^{p}})\| u-v\|_{L^{r}}\|_{L^{\gamma'}(0,T)}.
\end{align*}
Hence
\begin{align*}
&d(\Phi(u)-\Phi(v))\lesssim T^{\frac{1}{\gamma'}-\frac{1}{q}}(\| u\|^\sigma_{L^{\infty}((0,T),\z^s_{p})}+\| v\|^\sigma_{L^{\infty}((0,T),\z^s_{p})})d( u,v).
\end{align*}
By inequality \eqref{minh}, we infer that
\begin{align}\label{m}
&d(\Phi(u)-\Phi(v))\lesssim T^{\frac{1}{\gamma'}-\frac{1}{q}}(M+T\|u_0\|_{\z_p^s})^\sigma d( u,v).
\end{align}
Therefore, for $T=T(\| u_0\|_{\z^s_{p}})$ small enough, the mapping $\Phi:\E \rightarrow \E$ is a contraction and, by Banach's fixed point theorem, $\Phi$ has a unique fixed point $u$ in $\E$. Since $u=\Phi(u)\in C([0,T], \z_p^s(\re^d))$, the result follows.

\bigskip

\noindent
{\bf Case 2: $2<p<2d(\sigma+1)/(d+2)$ and $d\ge 3$.}\\
Here, we set $r=\rho=d(\sigma+2)/(d+s\sigma)$. Once again, by Proposition 4.9.4 in \cite{cazenave},
$$\||u|^{\sigma}u\|_{B^{s,2}_{\rho'}}\lesssim \|u\|_{L^{a}}^{\sigma}\|u\|_{B^{s,2}_{r}},$$
where $a=d(\sigma+2)/(d-2s)$. Also, the injection
$$B_{\rho}^{s,2}(\re^d)\hookrightarrow W^{s,\rho}(\re^d)\hookrightarrow L^{\rho^*}(\re^d)$$
holds, where $\rho^*=d\rho/(d-s\rho)$. A simple computation shows that $p<a<\rho^*$, hence, by interpolation, $$B_{\rho}^{s,2}(\re^d)\cap L^p(\re^d)\hookrightarrow L^{a}(\re^d).$$
Thus
$$\||u|^{\sigma}u\|_{B^{s,2}_{\rho'}}\lesssim (\|u\|_{L^p}+\|u\|_{B^{s,2}_{r}})^{\sigma}\|u\|_{B^{s,2}_{r}},$$
which compares to \eqref{compare}. A slight modification of the inequalities performed in the first case then yields the result.
\end{proof}
\begin{remark}
	A simple way to understand the restriction $p\le 2\sigma+2$ is to notice that, when one applies a Strichartz estimate on the Duhamel term,
	$$
	\left\|\int_0^t e^{i(t-\tau)\Delta}|u(\tau)|^\sigma u(\tau)d\tau \right\|_{L^q((0,T), L^r)} \lesssim \||u|^\sigma u\|_{L^{\gamma'}((0,T), L^{\rho'})}= \|u\|_{L^{\gamma'(\sigma+1)}((0,T), L^{\rho'(\sigma+1)})}^{\sigma+1},
	$$
	one needs $p\le\rho'(\sigma+1)$ to ensure integrability in space. Since $\rho'\le 2$, the condition follows.
\end{remark}



\section{Local well-posedness for $p> 2\sigma+2$}\label{main2}
\noindent
In this section we treat the remaining case $p> 2\sigma +2$. Note that this restriction implies that
\begin{equation}
\label{sigmarestriction}
\sigma\le \frac p2-1\le \frac{2s}{d-2s}. 
\end{equation}
\begin{proof}[Proof of Theorem \ref{teo:exist_p_grand}]
	Given admissible pairs $(q_j, r_j), j=1,2,3,$ (to be fixed later on), define
	\begin{align}
	\mathcal{Y}\coloneqq  \Bigg\{&u\in C([0,T],\z^s_p(\re^d)) \cap \Big(\bigcap_{j=1}^3L^{q_j}((0,T),\dot{B}^{s,2}_{r_j}(\re^d))\Big) \,:\,\nonumber\\
	&\|u\|_{\mathcal{Y}}\coloneqq \|u\|_{L^\infty((0,T),L^p)}+\sum_{j=1}^3 \|u\|_{L^{q_j}((0,T), \dot{B}^{s,2}_{r_j})}\le M\Bigg\}.
	\end{align}
	endowed with the natural metric $d(u,v)=\|u-v\|_{\mathcal{Y}}$. It is clear that $(\mathcal{Y}, d)$ is a complete metric space.
	
	As in the previous section, we consider the map
\begin{equation}\label{eq:funcY}
\Phi[u](t)=e^{it\Delta}u_0+i\lambda\int_0^te^{i(t-\tau)\Delta}|u(\tau)|^\sigma u(\tau)d\tau
\end{equation}
and prove that $\Phi:\mathcal{Y}\to \mathcal{Y}$ is a contraction, for $M=2\|u_0\|_{\z_p^s}$ and $T$ small. 

\noindent\textit{Step 1.} Taking the $L^p$ norm in \eqref{eq:funcY} and using Theorem \ref{teo:linear}, we find that
\begin{align}
 \|\Phi[u](t)\|_{L^p}&\le\|e^{it\Delta}u_0\|_{L^p}+|\lambda|\int_0^t\|e^{i(t-\tau)\Delta}|u(\tau)|^\sigma u(\tau)\|_{L^p}d\tau\nonumber\\
&\lesssim (1+|t|)\|u_0\|_{\z_s^p}+\int_0^t(1+|t-\tau|)\||u(\tau)|^{\sigma}u(\tau)\|_{\z_s^p}d\tau\nonumber\\
&\lesssim (1+T)\Big(\|u_0\|_{\z_s^p}+\int_0^T\|u(\tau)\|_{L^{p(\sigma+1)}}^{\sigma+1}d\tau+\int_0^T\||u(\tau)|^{\sigma}u(\tau)\|_{\dot{H}^s}d\tau\Big).\label{eq:lp}
\end{align}
By the homogeneous version of \eqref{compare} (see \cite{cazenave}), we know that
$$\||u|^{\sigma}u\|_{\dot{H}^s}=\||u|^{\sigma}u\|_{\dot{B}^{s,2}_2}\lesssim \|u\|_{L^a}^{\sigma}\|u\|_{\dot{B}^{s,2}_{r_1}},$$
where we choose 
$$a=p, \quad r_1=\dfrac{2p}{p-2\sigma}>2.$$Observe that with this choice, we have $r_1<p<2^*$. Therefore
$$\int_0^T\||u(\tau)|^{\sigma}u(\tau)\|_{\dot{H}^s}d\tau\lesssim \int_0^T\|u(\tau)\|_{L^p}^{\sigma}\|u(\tau)\|_{\dot{B}^{s,2}_{r_1}}d\tau\lesssim T^{\frac 1{q_1'}}\|u\|^{\sigma}_{L^{\infty}((0,T),L^p)}\|u\|_{L^{q_1}((0,T),\dot{B}^{s,2}_{r_1})}$$
where the pair $(q_1,r_1)$ is admissible.

\bigskip

\noindent
We now focus on the second term in \eqref{eq:lp}. Observe that, for $r_2>2$ (see \cite[Theorem 6.5.1]{berghlofstrom}),
$$\dot{B}_{r_2}^{s,2}(\re^d)\hookrightarrow \dot{B}_{p_2}^{0,2}(\re^d)\hookrightarrow L^{\rho}(\re^d),\quad\dfrac 1{\rho}=\dfrac 1{r_2}-\dfrac sd. $$
\noindent
Hence, by interpolation,  under the condition
\begin{equation}
\label{4c1}
p(\sigma+1)<\rho=\dfrac{dr_2}{d-sr_2},
\end{equation}
there exists $0<\theta<1$ such that
\begin{equation}
\label{deftheta}
\frac 1{p(\sigma+1)}=\frac{\theta}{p}+\frac{1-\theta}{\rho},
\end{equation}
and we get
$$\|u\|_{L^{p(\sigma+1)}}^{\sigma+1}\lesssim \|u\|_{L^p}^{\theta(\sigma+1)}\|u\|_{L^{\rho}}^{(1-\theta)(\sigma+1)}\lesssim \|u\|_{L^p}^{\theta(\sigma+1)}\|u\|_{\dot{B}_{r_2}^{s,2}}^{(1-\theta)(\sigma+1)}.$$
In conclusion, for $(q_2,r_2)$ admissible,
\begin{align*}
\int_0^T\|u(\tau)\|_{L^{p(\sigma+1)}}^{\sigma+1}d\tau&\lesssim \|u\|_{L^{\infty}((0,T),L^{p})}^{\theta(\sigma+1)}\int_0^T\|u(\tau)\|_{\dot{B}_{r_2}^{s,2}}^{(1-\theta)(\sigma+1)}d\tau\\&\lesssim T^{\frac{q_2-(1-\theta)(\sigma+1)}{q_2}}\|u\|_{L^{\infty}((0,T),L^{p})}^{\theta(\sigma+1)}\|u\|^{{(1-\theta)(\sigma+1)}}_{L^{q_2}((0,T),\dot{B}^{s,2}_{r_2})},
\end{align*}
provided that $q_2> (1-\theta)(\sigma+1)$, that is,
\begin{equation}
\label{4c2}
\frac 1{\sigma}+\frac{sp}{d\sigma}-\frac d4> \frac 1{r_2} \Big(\frac p\sigma-\frac d2\Big).
\end{equation}
We now choose $r_2$ such that conditions \eqref{4c1} and \eqref{4c2} hold:

\bigskip

\noindent
{\bf Case 1: $d=1,2$.}

\noindent
	We take $r_2=(d/s)^-$. Then \eqref{4c1} holds, and, replacing $r_2=d/s$ in \eqref{4c2}, we obtain the equivalent condition $\sigma<4/(d-2s)$, which holds in view of \eqref{sigmarestriction}.

\bigskip

\noindent
{\bf Case 2: $d\geq 3$.}

\noindent
	If $d/s\le 2d/(d-2)$, we can proceed exactly as in the previous case. If not, we choose $r_2=(2d/(d-2))^-$.
	Replacing in \eqref{4c1} we get the condition
	$$\displaystyle \sigma<\frac{2d}{p(d-2-2s)}-1.$$
	Taking the highest value of $p$, $p=2d/(d-2s)$, this condition amounts to
	$\sigma<2/(d-2-2s)$,
	which, in view of $\eqref{sigmarestriction}$, holds for $s<1$.

\bigskip

\bigskip

\noindent
We now examine the condition \eqref{4c2}. We claim that $2p>d\sigma$: if not, then $4p\le 2d\sigma \le d(p-2)$, which implies $d\ge 5$ and $p\ge 2d/(d-4)>2^*$, which is absurd. Taking $r_2=2d/(d-2)$, condition  \eqref{4c2} translates to
$$d(\sigma-2)<p(2s+2-d), $$
which holds, due to \eqref{sigmarestriction} and $s<2$. 

\bigskip

\noindent\textit{Step 2.} Now we deal with the estimates in $L^{q_j}((0,T), \dot{B}^{s,2}_{r_j}(\re^d))$. To do this, we set $r_3=2p/(p-\sigma)$ and let $q_3$ be such that $(q_3,r_3)$ is admissible.  Therefore
$$p=\sigma \frac{r_3}{r_3-2}=\sigma \frac{r_3r_3'}{r_3-r_3'}.$$
Since $p-2\sigma\geq 2$, it follows that $$2<r_3<p<\frac{2d}{d-2s}<\frac{2d}{d-2} \quad \mbox{for}\quad s<1.$$ 
Given any admissible pair $(q,r)$, by Strichartz estimates, we have that
$$\|\phi[u]\|_{L^q((0,T),{\dot{B}}^{s,2}_r)}\lesssim \|u_0\|_{\dot{H}^s}+ \||u|^{\sigma}u\|_{L^{q_3'}((0;T),\dot{B}^{s,2}_{r_3'})}.$$
Then, following the ideas in Step 1, we infer that
\begin{equation}
\label{p1} 
\||u|^{\sigma}u\|_{\dot{B}^{s,2}_{r_3'}}\lesssim \|u\|_{L^{a}}^{\sigma}\|u\|_{\dot{B}^{s,2}_{r_3}}, 
\end{equation}
where $a=\sigma r_3r_3'/(r_3-r_3')=p$.
Finally, by H\"older's inequality, we conclude that
$$ \||u|^{\sigma}u\|_{L^{q'}((0;T),\dot{B}^{s,2}_{r'})}\le \|u\|_{L^{\infty}((0,T);L^{p})}^{\sigma}\Big(\int_0^T\|u(\tau)\|_{\dot{B}_{r_3}^{s,2}}^{q_3'}d\tau\Big)^{\frac{1}{q_3'}}$$
$$\lesssim T^{\frac{1}{q_3'}-\frac {1}q_3}\|u\|_{L^{\infty}((0,T);L^{p})}^{\sigma}\|u\|_{L^{q_3}((0,T),\dot{B}^{s,2}_{r_3})}.$$
Note that this estimate also holds for $(q,r)=(+\infty,2)$, in which case we obtain the control of the $L^{\infty}((0,T),\dot{B}^{s,2}_2(\re^d))\cong L^{\infty}((0,T),\dot{H}^s(\re^d))$ norm.

\bigskip

\noindent\textit{Step 3.} The previous steps imply that, for $T$ small, $\Phi:\mathcal{Y} \to \mathcal{Y}$. As the metric is the one defined by the norm $\|\cdot\|_{\mathcal{Y}}$, the contraction estimate follows by analogous arguments.
\end{proof}

\section{Proof of Theorem \ref{teo:existglobal}}\label{sec:existglobal}

Throughout this section, we suppose $d=3$, $\sigma=2$, $4<p<6$ and $\lambda=-1$. We recall the formal energy functional
$$
E(u)=\frac{1}{2}\int |\nabla u|^2 dx + \frac{1}{4}\int |u|^4 dx.
$$
\begin{proposition}\label{prop:inducao}
	Fix $\epsilon>0$ small and $\theta=(p-4)/(2p-4)$. Given $u_0\in \z_p^1(\re^3)$, let $\phi\in \z^1_4(\re^3)$ and $\psi\in \z^1_p(\re^3) \cap (\bigcap_{\alpha>1} \dot{H}^\alpha(\re^3))$ satisfy
	$$
	u_0=\phi+\psi,\quad E(\phi)\lesssim \epsilon^{-4\theta}, \quad \|\psi\|_{\z^1_p}\lesssim 1,\quad \|\psi\|_{\dot{H}^\alpha}\lesssim \epsilon^{\alpha-1}.
	$$
	Then the solution $u$ of \eqref{NLS} with initial condition $u_0$ is defined on $[0,\delta],\ \delta=\epsilon^{8\theta^+}$. Moreover, there exists $\Phi\in \z^1_4(\re^3)$ such that
	$$
	u(\delta) = \Phi + e^{i\delta\Delta}\psi
	$$
	and
	$$
	E(\Phi)-E(\phi) \lesssim \epsilon^{\beta-3\theta}, \quad \beta=\frac{1+4\theta}{2}.
	$$
\end{proposition}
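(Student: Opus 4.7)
The plan is to set $w(t) := e^{it\Delta}\psi$ and $v(t) := u(t) - w(t)$, so that $v$ satisfies the perturbed defocusing cubic NLS
\[
iv_t + \Delta v = |v|^2 v + N(v,w), \qquad v(0) = \phi,
\]
where a direct expansion yields
\[
N(v,w) = 2|v|^2 w + v^2 \bar w + 2v|w|^2 + \bar v w^2 + |w|^2 w,
\]
a sum of terms each containing at least one factor from the $w$-family. The natural candidate for $\Phi$ is then $v(\delta)$.

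The first step is the quantitative well-posedness of $v$ on $[0,\delta]$ in the energy space $\z^1_4(\re^3)$. Since $\sigma=2$ and $p=4\le 2\sigma+2=6$, a variant of the fixed-point scheme of Theorem \ref{teo:exist_p_peq} (extended to $s=1$ as in the remark following Theorem \ref{teo:exist_p_grand}) applies to the Duhamel formulation for $v$; the extra source terms in $N$ are controlled via the a priori bounds $\|w(t)\|_{\dot{H}^\alpha}=\|\psi\|_{\dot{H}^\alpha}\lesssim\epsilon^{\alpha-1}$ for $\alpha\ge 1$ and $\|w(t)\|_{\z^1_p}\lesssim 1+t$, which follow from Theorem \ref{teo:linear} and the hypotheses on $\psi$. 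Since $\|v(0)\|_{\dot{H}^1}^2+\tfrac12\|v(0)\|_{L^4}^4=2E(\phi)\lesssim\epsilon^{-4\theta}$, the standard local existence time $\gtrsim\|v(0)\|_{\z^1_4}^{-4}$ is at least of order $\epsilon^{8\theta}$, comfortably larger than $\delta=\epsilon^{8\theta^+}$, and one simultaneously obtains
\[
\|v\|_{L^\infty((0,\delta),\dot{H}^1)}\lesssim\epsilon^{-2\theta},\qquad \|v\|_{L^\infty((0,\delta),L^4)}\lesssim\epsilon^{-\theta}.
\]

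The second step is the energy identity. Pairing the equation for $v$ with $-\Delta\bar v+|v|^2\bar v$ in $L^2$ and taking imaginary parts, the contribution of $|v|^2v$ vanishes (being real) and we obtain
\[
\frac{d}{dt}E(v(t))=\Im\int_{\re^3}N(v,w)\,\overline{(-\Delta v+|v|^2v)}\,dx.
\]
Integrating over $[0,\delta]$ and expanding $N$ produces a finite sum of multilinear space-time integrals, each of total degree $4$ or $6$ in $(v,\nabla v,w,\nabla w)$, with at least one $w$-factor. Each such integral is bounded by H\"older in space and time, placing $w$-factors into high-$L^q$ spaces where Sobolev embedding combined with $\|w\|_{\dot{H}^\alpha}\lesssim\epsilon^{\alpha-1}$ supplies a positive power of $\epsilon$, and $v$-factors in $L^4$, $L^6\hookleftarrow\dot{H}^1$, or $\dot{H}^1$, controlled by the bounds above. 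The most delicate term is the one with most $v$-factors, namely $|v|^2w\cdot(-\Delta\bar v)$; after integration by parts it reduces to integrals of the form $\int|v||w||\nabla v|^2\,dx$ and $\int|v|^2|\nabla w||\nabla v|\,dx$, whose optimal balance of the Sobolev exponent $\alpha$ (charged to $\psi$) with the H\"older indices (charged to the energy-space norms of $v$) yields exactly $\epsilon^{\beta-3\theta}=\epsilon^{1/2-\theta}$. The choice $\delta=\epsilon^{8\theta^+}$ is calibrated to absorb the time integration on this extremal term.

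The main obstacle is the multilinear bookkeeping: one must enumerate the bounded number of terms arising from $N$ and the integration by parts of the Laplacian, and for each choose an admissible Sobolev regularity $\alpha$ for $\psi$ and a H\"older configuration such that the resulting power of $\epsilon$ dominates $\beta-3\theta$. The term $|v|^2w$ carries the fewest $w$-factors and is therefore the extremal case that simultaneously fixes $\beta=(1+4\theta)/2$ and the admissible value of $\delta$.
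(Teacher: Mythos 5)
Your decomposition is genuinely different from the paper's. The paper writes $u=v+e^{it\Delta}\psi+w$, where $v$ is the exact $\z^1_4$-flow of the data $\phi$ --- so $E(v(t))=E(\phi)$ holds \emph{exactly} --- and all the work goes into showing, via Strichartz estimates on the Duhamel formula for the remainder $w$, that $\sup_{(q,r)}\|w\|_{L^q_tW^{1,r}_x}\lesssim \epsilon^{\beta}$; the energy increment is then read off algebraically from $E(v(\delta)+w(\delta))-E(v(\delta))$, and $\Phi=v(\delta)+w(\delta)\in\z^1_4(\re^3)$ comes for free since $w(\delta)\in H^1$. You instead set $v=u-e^{it\Delta}\psi$ (the sum of the paper's $v$ and $w$) and track $E(v)$ directly through the flux identity $\frac{d}{dt}E(v)=\Im\int N\,\overline{(-\Delta v+|v|^2v)}\,dx$ --- an almost-conservation-law formulation of the same Fourier-truncation idea. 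Your route is more economical (no auxiliary exact solution, no remainder equation), but it costs you exact energy conservation for $v$, so the a priori bounds of your Step 1 and the flux estimate of Step 2 must be closed together by a continuity/bootstrap argument that you only gesture at. Both routes are viable, and your identification of $2|v|^2w$ paired with $-\Delta\bar v$ as the extremal term, with output $\epsilon^{\beta-3\theta}$, matches what the paper's corresponding estimate for $|e^{it\Delta}\psi|\,|v|\,|\nabla v|$ produces.

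Three points need repair before this is a proof. First, the bound $\|v\|_{L^\infty_tL^4_x}\lesssim\epsilon^{-\theta}$ does not follow from the contraction (which only yields $\lesssim\|\phi\|_{\z^1_4}\sim\epsilon^{-2\theta}$); it is a consequence of the energy bound you are in the middle of proving, so as written the argument is circular --- harmless here, since $\epsilon^{-2\theta}$ still closes the estimates, but it must be stated as a bootstrap. Second, and more seriously, your declared toolkit for the $v$-factors ($L^4$, $L^6$, $\dot H^1$, all taken in $L^\infty_t$) cannot close the extremal term $\int_0^\delta\!\int |v|\,|w|\,|\nabla v|^2$: two copies of $\nabla v$ and one of $v$ cannot all be placed using energy-space norms alone. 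You need a space--time Strichartz bound for the \emph{perturbed} equation, e.g.\ $\|\nabla v\|_{L^{8/3}_tL^4_x}\lesssim\epsilon^{-2\theta}$, which is precisely what the paper proves in its Step 1 and which requires the calibration $\delta^{1/4}E(\phi)^{1/2}=\epsilon^{0^+}$, i.e.\ $\delta=\epsilon^{8\theta^+}$; with it, the term is bounded by $\|w\|_{L^\infty_{t,x}}\|v\|_{L^\infty_tL^4_x}\|\nabla v\|_{L^\infty_tL^2_x}\,\delta^{5/8}\|\nabla v\|_{L^{8/3}_tL^4_x}\lesssim\epsilon^{1/2-\theta}=\epsilon^{\beta-3\theta}$, confirming your claim; without it the step fails. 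Third, the multilinear bookkeeping is asserted rather than carried out, and the membership $\Phi=v(\delta)\in\z^1_4(\re^3)$ (not just $\z^1_p$) requires running the perturbed local theory in $\z^1_4$, using that the pure source term $|e^{it\Delta}\psi|^2e^{it\Delta}\psi$ lies in $H^1$ because $e^{it\Delta}\psi\in L^p\cap L^\infty\cap\dot H^1$ with $p<6$. These are all fixable, but they are where the substance of the proposition lives.
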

\begin{proof} Throughout the proof, we abbreviate $L^q((0,\delta), Y)$ to $L^q_tY_x$.
	
\noindent\textit{Step 1.} Let $v$ be the $\z_4^1$-solution of \eqref{NLS} with initial condition $\phi$ (whose existence was proved in \cite{simaoLpH1}). Due to the conservation of energy, $v$ is defined on $[0,\delta]$, $E(v(t))=E(\phi)\lesssim \epsilon^{-4\theta}$ and
$$
	\|v\|_{L^{8/3}_t L^4_x} \lesssim \delta^{3/8}\|v\|_{L^{\infty}_t L^4_x} \lesssim \delta^{3/8} E(\phi)^{1/4}.
$$
Furthermore, letting $(q,r)$ be an admissible pair and then applying Strichartz estimates on the Duhamel formula, we find that
	\begin{align*}
	\|\nabla v\|_{L^q_t L^r_x} &\lesssim \|\nabla \phi\|_{L^2} + \| |v|^2\nabla v\|_{L^{8/5}_tL^{4/3}_x}\\ &\lesssim \|\nabla \phi\|_{L^2} + \|v\|_{L^\infty_t L^4_x}^2\| \nabla v\|_{L^{8/5}_tL^{4}_x} \\ &\lesssim  E(\phi)^{1/2} + \delta^{1/4}E(\phi)^{1/2}\| \nabla v\|_{L^{8/3}_tL^{4}_x}.
	\end{align*}
	Since $\delta^{1/4}E(\phi)^{1/2}=\epsilon^{0^+}$, for $\epsilon$ small, we conclude that $\|\nabla v\|_{L^q_t L^r_x} \lesssim E(\phi)^{1/2}$.
	
	\bigskip

	\noindent\textit{Step 2.} We define the function $w$ by setting
	$$
	u=v + e^{it\Delta}\psi + w.
	$$
	Then $w$ is a solution of
	$$
	w(t)=i\int_0^t e^{i(t-\tau)\Delta}\left( |u(\tau)|^2 u(\tau) - |v(\tau)|^2 v(\tau)  \right)d\tau.
	$$
	Now we estimate $w$ in $L^q_t W^{1,r}_x$ over the time interval $[0,\delta]$, for any $(q,r)$ admissible. Let us first observe that, since the nonlinearity is cubic, one may first split it into several terms and then apply (a possibly different) Strichartz estimate to each one individually. We are then left with estimates in some dual Strichartz space $L^{\gamma'}_t L^{\rho'}_x$. Using Hölder's inequality, the analysis can be reduced to four terms:
	$$
	w^3,\quad v^2w,\quad v^2(e^{it\Delta}\psi),\quad (e^{it\Delta}\psi)^3.
	$$

	\noindent \textbf{Estimate for $w^3$}:
	\begin{align*}
	\||w|^3\|_{L^{8/5}_t W^{1,4/3}_x} \lesssim \delta^{1/4} \|w\|_{L^{\infty}_t H^1_x}^2\|w\|_{L^{8/3}_t W^{1,4}_x}\lesssim \epsilon^{2\theta^+} \|w\|_{L^{\infty}_t H^1_x}^2\|w\|_{L^{8/3}_t W^{1,4}_x}
	\end{align*}
	
	\noindent \textbf{Estimate for $v^2(e^{it\Delta}\psi)$}: for $\rho=2^+$, when no derivatives are present, we estimate
	\begin{align*}
	\||e^{it\Delta}\psi||v|^2\|_{L^{\gamma'}_t L^{\rho'}_x} &= \delta^{1^-}\|e^{it\Delta}\psi\|_{L_t^\infty L_x^{\infty^-}}\|v\|_{L^{\infty}_tL^4_x}^2 \\&\lesssim \delta^{1^-}\|\psi\|_{\dot{H}^{(3/2)^-}}E(\phi)^{1/2} \lesssim \epsilon^{1/2+6\theta}
	\end{align*}
	We now consider the estimates in $L^q_t\dot{W}_x^{1,r}$. On one hand, when the derivative falls onto the free solution,
	\begin{align*}
	\||\nabla e^{it\Delta}\psi||v|^2\|_{L^{\gamma'}_t L^{\rho'}_x} &= \delta^{1^-}\|e^{it\Delta}\nabla\psi_0\|_{L_t^\infty L_x^{\infty^-}}\|v\|_{L^{\infty}_tL^4_x}^2 \\&\lesssim \delta^{1^-}\|\psi\|_{\dot{H}^{(5/2)^-}}E(\phi)^{1/2} \lesssim \epsilon^{ 3/2+6\theta}
	\end{align*}
	On the other hand, if the derivative falls onto $v$,
	\begin{align*}
	\||e^{it\Delta}\psi||v||\nabla v|\|_{L^{\gamma'}_t L^{\rho'}_x} &= \delta^{5/8^-}\|e^{it\Delta}\psi\|_{L_t^\infty L_x^{\infty^-}}\|v\|_{L^{\infty}_tL^4_x}\|\nabla v\|_{L^{8/3}_tL^4_x} \\&\lesssim \delta^{5/8^-}\|\psi\|_{\dot{H}^{(3/2)^-}}E(\phi)^{3/4} \lesssim \epsilon^{1/2+ \theta}
	\end{align*}
	
	\noindent \textbf{Estimate for $(e^{it\Delta}\psi)^3$}: for $\rho=2^+$,
	\begin{align*}
	\| |e^{it\Delta}\psi|^{3} \|_{L^{\gamma'}_t L^{\rho'}_x} \lesssim \| e^{it\Delta} \psi \|_{L^{3\gamma'}_t L^{3\rho'}_x}^3 \lesssim \delta^{1^-}\left( \|e^{it\Delta}\psi\|_{L^p}^{(p/6)^+}\|e^{it\Delta}\psi\|_{\dot{H}^{(3/2)^-}}^{1-(p/6)^+}\right)^3\lesssim \epsilon^{8\theta+(6-p)/4}
	\end{align*}
	For the term with a derivative, the estimate is improved:
	\begin{align*}
	\| |e^{it\Delta}\psi|^{2}\nabla e^{it\Delta}\psi \|_{L^{\gamma'}_t L^{\rho'}_x} &\lesssim \| e^{it\Delta} \psi \|_{L^{3\gamma'}_t L^{3\rho'}_x}^2\| \nabla e^{it\Delta} \psi \|_{L^{3\gamma'}_t L^{3\rho'}_x} \\&\lesssim \delta^{1^-}\epsilon^{(6-p)/6}\left( \|\nabla e^{it\Delta}\psi\|_{L^2}^{(1/3)^+}\|\nabla e^{it\Delta}\psi\|_{\dot{H}^{(3/2)^-}}^{1-(1/3)^+}\right) \lesssim \epsilon^{8\theta+(6-p)/6+1}
	\end{align*}
	
	\noindent\textbf{Estimate for $v^2 w$}: once again, we estimate separately the term, depending on whether it has no derivatives, a derivative on $w$ or a derivative on $v$:
	\begin{align*}
	\|v^2 w\|_{L^{8/5}_t L^{4/3}_x} \lesssim \|v\|_{L^\infty_t L^4_x}^2 \| w\|_{L^{8/5}_t L^4_x} \lesssim E^{1/2}\delta^{1/4}\|w\|_{L^{8/3}_t L^4_x}\lesssim \epsilon^{0^+}\|w\|_{L^{8/3}_t L^4_x}
	\end{align*}
	\begin{align*}
	\|v^2 \nabla w\|_{L^{8/5}_t L^{4/3}_x} \lesssim \|v\|_{L^\infty_t L^4_x}^2 \| \nabla w\|_{L^{8/5}_t L^4_x} \lesssim E^{1/2}\delta^{1/4}\|\nabla w\|_{L^{8/3}_t L^4_x}\lesssim \epsilon^{0^+}\|\nabla w\|_{L^{8/3}_t L^4_x}
	\end{align*}
	\begin{align*}
	\|vw\nabla v\|_{L^2_tL^{6/5}_x}&\lesssim \delta^{1/2}\left\|\|v\|_{L_x^6}\|w\|_{L_x^6}\|\nabla v\|_{L_x^2} \right\|_{L^\infty_t} \lesssim \left\|\|\nabla v\|_{L_x^2}\|\nabla w\|_{L_x^2}\|\nabla v\|_{L_x^2} \right\|_{L^\infty_t}\\&\lesssim \delta^{1/2}E\|w\|_{L^\infty_t H^1_x} \lesssim \epsilon^{0^+}\|w\|_{L^\infty_t H^1_x} .
	\end{align*}
	In conclusion, setting $X=\underset{(q,r)}{\sup}\|w\|_{L^q_tW^{1,r}_x}$,
	\begin{align*}
	X\lesssim \epsilon^{1/2 + \theta} + \epsilon^{0^+}X + \epsilon^{2\theta}X^3.
	\end{align*}
	By obstruction, we conclude that 
	$$
	\underset{(q,r)}{\sup}\|w\|_{L^q_tW^{1,r}_x} \lesssim \epsilon^{1/2 + \theta} = \epsilon^\beta.
	$$
	\textit{Step 3.} Define
	$$
	\Phi= v(\delta) + w(\delta).
	$$
	Then $\Phi\in \z_4^1$, $u(\delta)=\Phi + e^{i\delta\Delta}\psi$ and
	\begin{align*}
	E(\Phi)-E(\phi)&= (E(v(\delta) + w(\delta))- E(v(\delta)))\\&\lesssim  \|\nabla w(\delta)\|_{L^2}\left(\|\nabla v(\delta)\|_{L^2} + \|\nabla w(\delta)\|_{L^2}\right) + \|w(\delta)\|_{L^4}\left( \|v(\delta)\|_{L^4}^3 + \| w(\delta)\|_{L^4}^3  \right)\\&\lesssim \epsilon^{\beta}E(\phi)^{1/2} +  \epsilon^{2\beta} + \epsilon^\beta E(\phi)^{3/4} +\epsilon^{4\beta} \lesssim \epsilon^{\beta-3\theta}.
	\end{align*} 
\end{proof}

\begin{proof}[Proof of Theorem \ref{teo:existglobal}]
It suffices to prove that any initial condition $u_0\in \z_p^1(\re^3)$ gives rise to a solution $u$ defined on $[0,1]$. In this interval, the linear group is bounded in $\z_p^1(\re^3)$ uniformly in time.

\noindent\textit{Step 1.} Fix a cut-off function supported on the unit ball $\chi$ and set $\chi_\epsilon(\xi)=\chi(\xi/\epsilon)$. Define
$$
\psi = \check{\chi}_\epsilon \ast u_0\quad \mbox{and}\quad \phi=u_0-\psi.
$$
Then, for any $\epsilon>0$ small and $\alpha>1$, using Mikhlin's multiplier theorem,
	$$
	\|\psi\|_{\z^1_p}, \|\phi\|_{\z^1_p}\lesssim \|u_0\|_{\z^1_p}.
	$$
	Observe that these inequalities are uniform in $\epsilon$ (see \cite[Theorem 6.1.3]{berghlofstrom}). Moreover, 
	$$
	\|\psi\|_{\dot{H}^\alpha} \lesssim \epsilon^{\alpha-1} \|u_0\|_{\z^1_p} \quad\mbox{and}\quad \|\phi\|_{L^2}\lesssim \epsilon^{-1}\|\phi\|_{\dot{H}^1} \lesssim \epsilon^{-1}\|u_0\|_{\z^1_p}.
	$$
	By interpolation, we conclude that, for $\theta=(p-4)/(2p-4)$,
	$
	\|\phi\|_{L^4}\lesssim \epsilon^{-\theta}\|u_0\|_{\z_p^1}
	$
	and $E(\phi)\lesssim  \epsilon^{-4\theta}$.

\noindent \textit{Step 2.} Applying Proposition \ref{prop:inducao} and setting $\delta=\epsilon^{8\theta^+}$, $u$ is defined on $[0,\delta]$ and, at time $t=\delta$, it can be decomposed as $\Phi+e^{i\delta\Delta}\psi$. Since $\Phi$ and $e^{i\delta\Delta}\psi$ are still in the conditions of Proposition \ref{prop:inducao}, the result may be applied once more to conclude that $u$ is defined on $[0,2\delta]$. The argument may be iterated for as long as the energy $E(\phi)$ satisfies the bound $E(\phi)\lesssim \epsilon^{4\theta}$. Since the number of steps needed to reach $T=1$ is of the order $\delta^{-1}$, we require
$$
\mbox{Energy increment}\times\mbox{Number of steps}\sim \epsilon^{\beta-3\theta} \delta^{-1} \ll \epsilon^{-4\theta},
$$
that is, $\beta>7\theta$. This condition is verified whenever $4<p<9/2$. The proof is finished.
\end{proof}

\section{Acknowledgements}
V. Barros was partially supported by CNPq, by FCT project PTDC/MAT-PUR/28177/2017, with national funds, and by CMUP (UIDB/00144/2020), which is funded by FCT with national (MCTES) and European structural funds through the programs FEDER, under the partnership agreement PT2020. S. Correia  was partially supported by Funda\c{c}\~ao para a Ci\^encia e Tecnologia, through the grant UID/MAT/04459/2019. F. Oliveira was partially supported by Funda\c{c}\~ao para a Ci\^encia e Tecnologia, through the grant UID/MULTI/00491/2019.

\bibliography{Biblioteca}

\begin{thebibliography}{10}

\bibitem{BV08}
V.~Banica and L.~Vega.
\newblock On the {D}irac delta as initial condition for nonlinear
  {S}chr\"odinger equations.
\newblock {\em Ann. Inst. H. Poincar\'e Anal. Non Lin\'eaire}, 25(4):697--711,
  2008.

\bibitem{berghlofstrom}
J\"oran Bergh and J\"orgen L\"ofstr\"om.
\newblock {\em Interpolation spaces. {A}n introduction}.
\newblock Springer-Verlag, Berlin-New York, 1976.
\newblock Grundlehren der Mathematischen Wissenschaften, No. 223.

\bibitem{bourgain2}
J.~Bourgain.
\newblock Refinements of {S}trichartz' inequality and applications to
  {$2$}{D}-{NLS} with critical nonlinearity.
\newblock {\em Internat. Math. Res. Notices}, (5):253--283, 1998.

\bibitem{bourgain}
J.~Bourgain.
\newblock {\em Global solutions of nonlinear {S}chr\"{o}dinger equations},
  volume~46 of {\em American Mathematical Society Colloquium Publications}.
\newblock American Mathematical Society, Providence, RI, 1999.

\bibitem{cazenave}
Thierry Cazenave.
\newblock {\em Semilinear {S}chr\"odinger equations}, volume~10 of {\em Courant
  Lecture Notes in Mathematics}.
\newblock New York University, Courant Institute of Mathematical Sciences, New
  York; American Mathematical Society, Providence, RI, 2003.

\bibitem{ckstt_dnls}
J.~Colliander, M.~Keel, G.~Staffilani, H.~Takaoka, and T.~Tao.
\newblock A refined global well-posedness result for {S}chr\"{o}dinger
  equations with derivative.
\newblock {\em SIAM J. Math. Anal.}, 34(1):64--86, 2002.

\bibitem{ckstt_kdv}
J.~Colliander, M.~Keel, G.~Staffilani, H.~Takaoka, and T.~Tao.
\newblock Sharp global well-posedness for {K}d{V} and modified {K}d{V} on
  {$\Bbb R$} and {$\Bbb T$}.
\newblock {\em J. Amer. Math. Soc.}, 16(3):705--749, 2003.

\bibitem{ckstt}
J.~Colliander, M.~Keel, G.~Staffilani, H.~Takaoka, and T.~Tao.
\newblock Global existence and scattering for rough solutions of a nonlinear
  {S}chr\"{o}dinger equation on {$\Bbb R^3$}.
\newblock {\em Comm. Pure Appl. Math.}, 57(8):987--1014, 2004.

\bibitem{Correia}
S.~Correia.
\newblock Local cauchy theory for the nonlinear {S}chr\"odinger equation in
  spaces of infinite mass.
\newblock {\em Rev Mat Complut}, 31:449--465, 2018.

\bibitem{simaoLpH1}
Sim\~{a}o Correia.
\newblock Local {C}auchy theory for the nonlinear {S}chr\"{o}dinger equation in
  spaces of infinite mass.
\newblock {\em Rev. Mat. Complut.}, 31(2):449--465, 2018.

\bibitem{gallo}
Cl\'ement Gallo.
\newblock Schr\"odinger group on {Z}hidkov spaces.
\newblock {\em Adv. Differential Equations}, 9(5-6):509--538, 2004.

\bibitem{goubet}
O.~Goubet.
\newblock Two remarks on solutions of gross-pitaevskii equations on zhidkov
  spaces.
\newblock {\em Mh Math}, 151:39--44, 2007.

\bibitem{kivshar}
Y.S. Kivshar and B.~Luther-Davies.
\newblock Dark solitons: physics and applications.
\newblock {\em Physics {R}eports}, 298:81--197, 1998.

\bibitem{sulem}
C.~Sulem and P.-L. Sulem.
\newblock {\em The nonlinear {S}chr\"odinger equation}, volume 139 of {\em
  Applied Mathematical Sciences}.
\newblock Springer-Verlag, New York, 1999.
\newblock Self-focusing and wave collapse.

\bibitem{zk}
P.E. Zhidkov.
\newblock The cauchy problem for a nonlinear schr\"odinger equation.
\newblock {\em Soobshch. OIYaI}, R5-87-373, Dubna, 1987.

\end{thebibliography}
\bibliographystyle{plain}
\begin{center}
	{\scshape Vanessa Barros}\\
	{\footnotesize
	Departamento de Matemática, Universidade Federal da Bahia\\ Av. Ademar de Barros s/n, 40170-110 Salvador, Brazil\\
	and\\
	UniLaSalle, Campus de Ker Lann, 35170 Bruz, France\\
	vanessa.barros@unilasalle.fr
	}
	\vskip15pt
	{\scshape Simão Correia}\\
	{\footnotesize
		Center for Mathematical Analysis, Geometry and Dynamical Systems,\\
		Department of Mathematics,\\
		Instituto Superior T\'ecnico, Universidade de Lisboa\\
		Av. Rovisco Pais, 1049-001 Lisboa, Portugal\\
		simao.f.correia@tecnico.ulisboa.pt
	}
	\vskip15pt
	{\scshape Filipe Oliveira}\\
	{\footnotesize
		Mathematics Department and CEMAPRE\\
		ISEG, Universidade de Lisboa,\\
		Rua do Quelhas 6, 1200-781 Lisboa, Portugal\\
		foliveira@iseg.ulisboa.pt
	}

\end{center}
\end{document}